\documentclass[pdflatex,sn-mathphys-num]{sn-jnl}
\usepackage{graphicx}%
\usepackage{multirow}%
\usepackage{amsmath,amssymb,amsfonts}%
\usepackage{amsthm}%
\usepackage{mathrsfs}%
\usepackage[title]{appendix}%
\usepackage{xcolor}%
\usepackage{textcomp}%
\usepackage{manyfoot}%
\usepackage{booktabs}%
\usepackage{algorithm}%
\usepackage{algorithmicx}%
\usepackage{algpseudocode}%
\usepackage{listings}%

\theoremstyle{thmstyleone}%
\newtheorem{theorem}{Theorem}
\newtheorem{corollary}[theorem]{Corollary}%

\theoremstyle{thmstyletwo}%

\theoremstyle{thmstylethree}%

\begin{document}

\title[Article Title]{A statistical theory of graph regularization for RNA velocity near developmental bifurcations}

\author*[1]{\fnm{Lingqi} \sur{Meng}}\email{lingqime@ybu.edu.cn}

\author[2]{\fnm{Shiruo} \sur{Wang}}\email{shiruomath@gmail.com}

\affil*[1]{\orgdiv{Department of Mathematics, Yanbian University, Yanbian Korean Autonomous Prefecture, Jilin 133002, China}}

\affil[2]{\orgname{Department of Mathematics, State University of New York at Buffalo, Buffalo, New York 14260, United States}}


\abstract{Graph-based regularization is widely used to stabilize noisy RNA-velocity estimates by encouraging transcriptionally similar cells to share similar velocity vectors. Near developmental bifurcations, however, proximity-based graphs may connect cells from distinct daughter lineages, reducing estimation variance at the cost of attenuating biologically meaningful lineage-specific dynamics. We formulate graph-regularized RNA velocity as a statistical estimation problem on a potentially misspecified cell-state graph and develop a theoretical framework for analyzing this trade-off. We derive an exact graph-spectral bias--variance decomposition that characterizes how Laplacian regularization suppresses estimation noise while introducing systematic smoothing bias. To quantify lineage preservation, we introduce a branch-sensitive risk that separates within-lineage denoising from cross-lineage information leakage. We further show that persistent cross-branch connectivity can induce nonvanishing branch bias, implying that standard proximity-graph regularization may remain asymptotically inconsistent near developmental bifurcations. These results provide a mathematical foundation for understanding both the statistical benefits and the geometric limitations of graph regularization for RNA-velocity estimation.}

\keywords{RNA velocity; Graph regularization; Graph Laplacian; Developmental bifurcation}

\maketitle

\section{Introduction}

Single-cell RNA sequencing provides static molecular measurements from large populations of cells, whereas biological processes such as differentiation and regeneration are inherently dynamic. RNA velocity was introduced to recover directional information from snapshot transcriptomic data using unspliced and spliced messenger RNA abundances \cite{la2018rna}, and was subsequently extended to transient cellular states through dynamical modeling \cite{bergen2020generalizing}.

RNA-velocity estimates, however, can be sensitive to modeling assumptions, sparse counts, preprocessing, and neighborhood-based smoothing \cite{bergen2021rna,gorin2022rna,zheng2023pumping}. In practice, local transcriptomic neighborhoods, commonly represented by $k$-nearest-neighbor or related similarity graphs, are used to stabilize noisy velocity-derived quantities \cite{la2018rna,bergen2020generalizing,lange2022cellrank}. Mathematically, such procedures are closely related to Laplacian regularization, where graph smoothness is quantified through a Dirichlet energy and regularization suppresses high-frequency graph components \cite{chung1997spectral,belkin2006manifold}.

The validity of this approach depends critically on whether the cell-state graph represents the geometry relevant to the dynamics. Proximity graphs are widely used to approximate low-dimensional structure from sampled data, and their graph Laplacians can converge to continuum differential operators under suitable geometric, sampling, and bandwidth assumptions \cite{hein2007graph, belkin2008towards}. In single-cell trajectory analysis, related neighborhood graphs are used to represent phenotypic manifolds and constrain transitions between transcriptionally similar cells \cite{haghverdi2016diffusion,setty2016wishbone,lange2022cellrank}.

Developmental bifurcations create a particular difficulty. Cells belonging to distinct daughter lineages may remain close in transcriptomic space while their future dynamics begin to diverge. A proximity graph may therefore contain cross-branch edges that are geometrically short but dynamically inappropriate. Smoothing across such edges reduces noise but can simultaneously attenuate lineage-specific velocity differences. This raises the central mathematical question of this work: under what conditions does graph regularization improve RNA-velocity estimation, and when does graph misspecification instead produce systematic loss of branch-specific dynamics?

We formulate graph-regularized RNA velocity as a statistical estimation problem on a potentially misspecified cell-state graph. For Laplacian regularization, we derive an exact graph-spectral bias--variance decomposition showing how spectral filtering trades variance reduction against distortion of the true velocity field. We then introduce a branch-sensitive risk and an exactly solvable two-branch model, from which we obtain explicit formulas for attenuation of daughter-lineage velocity contrasts by cross-branch connectivity.

Our main asymptotic result identifies a distinct failure mechanism from classical graph-Laplacian consistency theory. Near a developmental bifurcation, proximity-based graph construction can retain a nonvanishing amount of cross-branch connectivity as the sample size increases. When the resulting effective cross-branch conductance persists at the regularization scale, the branch-contrast error remains bounded away from zero. Thus, increasing sample size alone does not guarantee branch preservation: consistency requires the effective strength of cross-branch smoothing to vanish.

The framework is deliberately conditional on an initial RNA-velocity estimator. Rather than modifying the kinetic model used to infer velocity, we isolate the subsequent graph-regularization problem and characterize how its statistical behavior depends on developmental geometry. This separates errors arising from kinetic inference from those induced by graph smoothing and identifies graph misspecification as a fundamental source of bias near developmental bifurcations.

\section{Mathematical models}
\label{sec:framework}

This section introduces the mathematical framework used throughout the paper. We distinguish the underlying biological dynamics, the sampled cell states, the initial RNA-velocity estimator, and the graph used for regularization.

Before introducing the mathematical framework, we briefly illustrate the role
of graph regularization within a typical RNA velocity analysis pipeline (Fig.~\ref{fig:pipeline}). Starting from an initial velocity estimate, a cell-state graph is constructed
from transcriptionally similar cells, after which graph regularization
propagates velocity information across neighboring cells to produce a smoother
velocity field. The present work focuses exclusively on this graph
regularization step and analyzes its statistical behavior under graph
misspecification.

\begin{figure}[t]
    \centering
    \includegraphics[width=\textwidth]{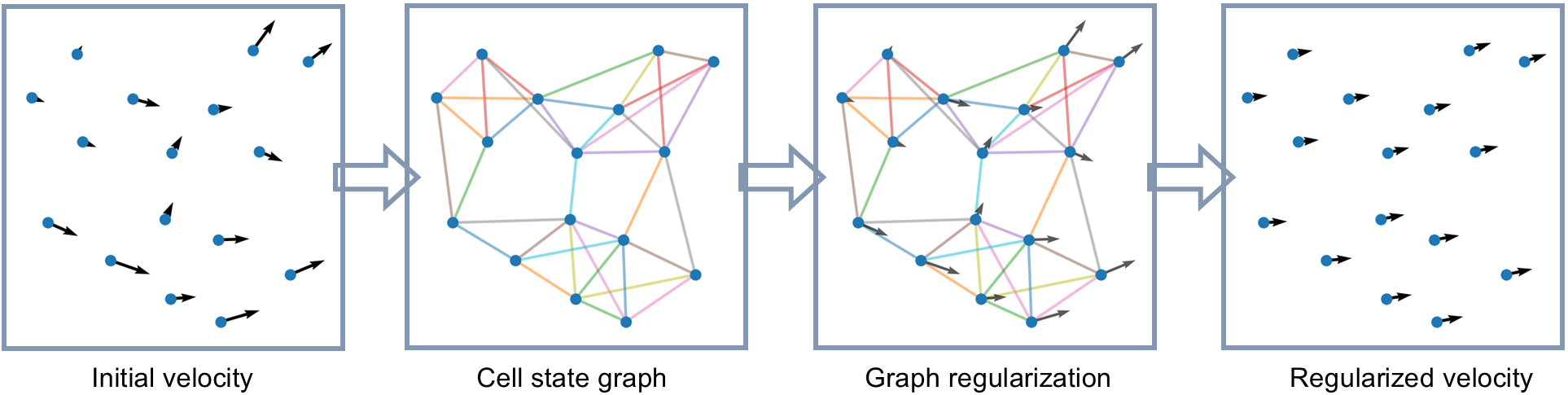}
    \caption{
    Illustration of the graph-regularized RNA velocity pipeline.
    An initial velocity estimate is represented as a vector field on the sampled
    cell states. A cell-state graph is then constructed from transcriptional
    similarity, and graph regularization smooths the velocity field over the graph
    to obtain the final regularized estimate. 
    }
    \label{fig:pipeline}
\end{figure}

Throughout the paper, vectors are column vectors. For a matrix $A$, we write $A^\top$ for its transpose, $\|A\|_F$ for its Frobenius norm, $\|A\|_2$ for its spectral norm, and $\operatorname{tr}(A)$ for its trace. The identity matrix is denoted by $I_N$, or simply by $I$ when its dimension is clear. For a symmetric matrix $A$, the notation $A\succeq0$ means that $A$ is positive semidefinite.

\subsection{Cell-state dynamics and sampled velocity field}
\label{subsec:cell-dynamics-sampled-field}

Let $\mathcal{M}$ denote the latent cell-state space. We assume that $\mathcal{M}$ is a compact subset of $\mathbb{R}^p$, or more generally a finite union of smooth low-dimensional branches embedded in $\mathbb{R}^p$. The ambient dimension $p$ may represent gene-expression features, principal components, or another representation used for graph construction.

A biological trajectory is modeled by a differentiable curve
\[
x:[0,1]\longrightarrow\mathcal{M},
\]
with property
\begin{equation}
\frac{\mathrm{d}x(t)}{\mathrm{d}t} = v^\star\bigl(x(t)\bigr),
\label{eq:continuous-biological-ode}
\end{equation}
where
\[
v^\star:\mathcal{M}\longrightarrow\mathbb{R}^q
\]
is the unknown biological velocity field. The dimensions $p$ and $q$ need not coincide; for example, the graph may be constructed in a low-dimensional embedding while velocities are represented in another coordinate system.

The observed cells are regarded as independent samples from a population whose local dynamics are governed by the common vector field $v^\star$. To be precise, let $X_1,\ldots,X_N\in\mathcal{M}$ be the observed cell states, assumed to be independent samples from a distribution $P_X$ supported on $\mathcal{M}$. For each cell, define $V_i^\star = v^\star(X_i) \in\mathbb{R}^q$, and collect these vectors into
\begin{equation}
V^\star =
\begin{bmatrix}
    (V_1^\star)^\top\\
    \vdots\\
    (V_N^\star)^\top
\end{bmatrix}
\in\mathbb{R}^{N\times q},
\label{eq:true-velocity-matrix}
\end{equation}
whose $i$-th row is the true velocity of cell $i$. Throughout the paper, $v^\star$ denotes the continuous biological velocity field, whereas $V^\star$ denotes its evaluation on the observed sample.

\subsection{Initial RNA-velocity estimator}
\label{subsec:initial-estimator}

Let $\widehat{V}^{(0)} \in \mathbb{R}^{N\times q}$ denote an initial RNA-velocity estimate obtained from an arbitrary kinetic estimation procedure. The proposed framework is independent of the underlying kinetic model and analyzes only the statistical properties of the subsequent graph-regularization step.

We write
\begin{equation}
\widehat{V}^{(0)} = V^\star + B + \Xi,
\label{eq:general-observation-model}
\end{equation}
where
\[
B = \mathbb{E} \!\left[
\widehat{V}^{(0)} - V^\star \,\middle|\, X_1,\ldots,X_N
\right]
\]
is the conditional bias and
\[
\mathbb{E} \!\left[ 
\Xi \,\middle|\, X_1,\ldots,X_N
\right]
= 0
\]
is the centered stochastic error.

For most theoretical results in this paper, we consider the conditionally unbiased model
\begin{equation}
\widehat{V}^{(0)} = V^\star + \Xi,
\label{eq:unbiased-observation-model}
\end{equation}
since the general model is recovered by replacing $V^\star$ with $V^\star+B$.

When explicit expressions are required (see Theorem \ref{thm:exact-spectral-risk}), we further assume the isotropic conditional covariance model
\begin{equation}
\operatorname{Cov}
\!\left(
\operatorname{vec}(\Xi)
\,\middle|\,
X_1,\ldots,X_N
\right)
=
\sigma^2I_{Nq},
\label{eq:isotropic-noise}
\end{equation}
where $\operatorname{vec}(\Xi)$ represents the vectorization of matrix $\Xi$.

For notational simplicity, we omit the conditioning notation and write
\[
\mathbb{E}[\cdot]
\qquad\text{and}\qquad
\operatorname{Cov}(\cdot)
\]
instead of
\[
\mathbb{E}
\!\left[
\cdot
\,\middle|\,
X_1,\ldots,X_N
\right]
\qquad\text{and}\qquad
\operatorname{Cov}
\!\left(
\cdot
\,\middle|\,
X_1,\ldots,X_N
\right),
\]
respectively, whenever no ambiguity arises.

\subsection{Cell-state graph}

Let $\mathcal{G} = (\mathcal{V},\mathcal{E},W)$ be an undirected weighted graph on the observed cells, where $\mathcal{V}=\{1,\ldots,N\}$ and $W=(w_{ij})$ is symmetric with $w_{ij}\ge0$ and $w_{ii}=0$. Let
\[
D=\operatorname{diag}(d_1,\ldots,d_N),
\qquad
d_i=\sum_{j=1}^Nw_{ij},
\]
and define the graph Laplacian $L=D-W$. One can verify
\[
L=L^\top,
\qquad
L\succeq0,
\qquad
L\mathbf1_N=0.
\]

For any $V\in\mathbb{R}^{N\times q}$, its graph Dirichlet energy is
\begin{equation}
\mathcal{E}_L(V) = \operatorname{tr}(V^\top LV) 
= \frac{1}{2} \sum_{i,j} w_{ij} \|V_i-V_j\|_2^2.
\label{eq:dirichlet-edge-form}
\end{equation}
This quadratic form penalizes velocity differences across adjacent cells and serves as the smoothness regularizer throughout the paper.

Next, we consider proximity graph construction. Let $Z_i\in\mathbb{R}^p$ denote the representation used for graph construction. A generic proximity graph is defined by
\begin{equation}
w_{ij}^{\mathrm{prox}} = K\!\left( \frac{\|Z_i-Z_j\|_2}{h_N} \right)
\mathbf{1} \left\{i\in\mathcal N_k(j)
\text{ or }
j\in\mathcal N_k(i)\right\},
\label{eq:proximity-weight}
\end{equation}
where $K$ is a nonincreasing kernel, $h_N$ is the bandwidth, and $\mathcal N_k(i)$ denotes the $k$ nearest neighbors of cell $i$. The resulting graph is denoted by $\mathcal G_{\mathrm{obs}} = (\mathcal V,\mathcal E_{\mathrm{obs}},W_{\mathrm{obs}})$ with Laplacian $L_{\mathrm{obs}} = D_{\mathrm{obs}}-W_{\mathrm{obs}}$.

Near developmental bifurcations, $L_{\mathrm{obs}}$ may contain edges connecting cells from distinct daughter lineages despite their different future dynamics.

\subsection{Branch structure and the oracle branch graph}
\label{subsec:branch-structure}

Suppose
\[
\mathcal M
=
\bigcup_{b=1}^K
\mathcal M_b,
\]
where the branches intersect only at bifurcation points.

Let
\[
g_i\in\{1,\ldots,K\}
\]
denote the branch label of cell $i$.

Define
\begin{equation}
(W_{\mathrm{in}})_{ij}
=
w_{ij}\mathbf1\{g_i=g_j\},
\end{equation}

\begin{equation}
(W_{\mathrm{cross}})_{ij}
=
w_{ij}\mathbf1\{g_i\neq g_j\},
\end{equation}

so that

\[
W_{\mathrm{obs}}
=
W_{\mathrm{in}}
+
W_{\mathrm{cross}}.
\]

Their Laplacians satisfy

\begin{equation}
L_{\mathrm{obs}}
=
L_{\mathrm{in}}
+
L_{\mathrm{cross}},
\label{eq:laplacian-decomposition}
\end{equation}

where
$L_{\mathrm{in}}$
serves as an oracle graph describing ideal within-branch connectivity.

For any
$V$,

\begin{equation}
\operatorname{tr}
(V^\top L_{\mathrm{obs}}V)
=
\operatorname{tr}
(V^\top L_{\mathrm{in}}V)
+
\operatorname{tr}
(V^\top L_{\mathrm{cross}}V),
\label{eq:within-cross-energy}
\end{equation}

which separates within-branch smoothing from cross-branch smoothing.

\subsection{Graph-regularized velocity estimator}

Given an initial estimator $\widehat V^{(0)}$, a graph Laplacian $L$, and a regularization parameter $\rho\ge0$, define
\begin{equation}
\widehat V_\rho(L) = \arg\min_{V\in\mathbb R^{N\times q}}
\left\{ \| \widehat V^{(0)} - V \|_F^2 + \rho \operatorname{tr}(V^\top LV) \right\}.
\label{eq:graph-regularized-objective}
\end{equation}
The optimality condition gives
\begin{equation}
(I+\rho L) \widehat V_\rho(L) = \widehat V^{(0)},
\label{eq:regularized-linear-system}
\end{equation}
and therefore
\begin{equation}
\widehat V_\rho(L) = (I+\rho L)^{-1} \widehat V^{(0)}.
\label{eq:closed-form-estimator}
\end{equation}

Define
\begin{equation}
S_\rho(L) = (I+\rho L)^{-1},
\label{eq:smoothing-operator}
\end{equation}
so that
\[
\widehat V_\rho(L)
=
S_\rho(L)
\widehat V^{(0)}.
\]
Since $L$ is symmetric positive semidefinite, it admits the eigendecomposition $L = \Phi\Lambda\Phi^\top$, where
\[
\Lambda
=
\operatorname{diag}
(\lambda_1,\ldots,\lambda_N),
\qquad
0=\lambda_1\le\cdots\le\lambda_N.
\]
Thus, we have
\begin{equation}
S_\rho(L) = \Phi (I+\rho\Lambda)^{-1} \Phi^\top,
\label{eq:spectral-smoothing-operator}
\end{equation}
and
\begin{equation}
\widehat V_\rho(L)
=
\sum_{k=1}^N
\frac{1}{1+\rho\lambda_k}
\phi_k\phi_k^\top
\widehat V^{(0)}.
\label{eq:spectral-filter}
\end{equation}

Thus graph regularization acts as a spectral filter that increasingly suppresses higher graph-frequency components.

\subsection{Conditional expectation, bias, and covariance}
\label{subsec:conditional-moments}

Under the unbiased observation model
\eqref{eq:unbiased-observation-model}, and conditional on the observed cell
states and graph,
\begin{equation}
    \widehat{V}_{\rho}(L)
    -
    V^\star
    =
    \bigl(S_{\rho}(L)-I\bigr)V^\star
    +
    S_{\rho}(L)\Xi.
    \label{eq:error-decomposition}
\end{equation}

The conditional expectation is
\begin{equation}
    \mathbb{E}
    \left[
        \widehat{V}_{\rho}(L)
        \,\middle|\,
        X_1,\ldots,X_N,L
    \right]
    =
    S_{\rho}(L)V^\star.
    \label{eq:conditional-expectation-estimator}
\end{equation}
Hence the graph-induced conditional bias is
\begin{equation}
    \operatorname{Bias}_{\rho}(L)
    =
    \left(
        S_{\rho}(L)-I
    \right)V^\star
    =
    -\rho
    (I+\rho L)^{-1}
    LV^\star.
    \label{eq:graph-induced-bias}
\end{equation}

The second equality follows from the resolvent identity
\[
(I+\rho L)^{-1}-I
=
-\rho(I+\rho L)^{-1}L.
\]
It shows that regularization introduces no bias when $LV^\star=0$, that is,
when the true sampled velocity field is constant on every connected
component of the graph.

For each coordinate $r\in\{1,\ldots,q\}$, let
\[
\Xi_{\cdot r}
=
(\Xi_{1r},\ldots,\Xi_{Nr})^\top
\]
denote the noise vector for that coordinate, with covariance
\[
\Sigma_r
=
\operatorname{Cov}
\left(
    \Xi_{\cdot r}
    \,\middle|\,
    X_1,\ldots,X_N
\right).
\]
Then
\begin{equation}
    \operatorname{Cov}
    \left(
        \widehat{V}_{\rho,\cdot r}(L)
        \,\middle|\,
        X_1,\ldots,X_N,L
    \right)
    =
    S_{\rho}(L)
    \Sigma_r
    S_{\rho}(L).
    \label{eq:regularized-coordinate-covariance}
\end{equation}
Under isotropic noise,
\[
\Sigma_r
=
\sigma^2 I_N,
\]
and therefore
\begin{equation}
    \operatorname{Cov}
    \left(
        \widehat{V}_{\rho,\cdot r}(L)
    \right)
    =
    \sigma^2 S_{\rho}(L)^2.
    \label{eq:isotropic-regularized-covariance}
\end{equation}

These expressions provide the basis for the exact bias--variance theory
developed in the next section.

\section{Results}

\subsection{Graph-spectral risk theory}
\label{sec:graph-risk}

In this section, we characterize the statistical effect of quadratic graph regularization. Throughout subsections~\ref{sec:graph-risk}--\ref{sec:branch-leakage}, we condition on the observed cell states and on the graph Laplacian $L$. Accordingly, $V^\star$ and $L$ are treated as fixed, whereas the initial velocity noise $\Xi$ is random.

Recall that
\[
\widehat V^{(0)}=V^\star+\Xi, 
\qquad 
\widehat V_\rho = S_\rho\widehat V^{(0)},
\qquad
S_\rho=(I+\rho L)^{-1}.
\]
Let $L=\Phi \Lambda \Phi^\top, \Lambda=\operatorname{diag}(\lambda_1,\ldots,\lambda_N)$, where $\Phi=[\phi_1,\ldots,\phi_N]$ is orthogonal and $0\leq\lambda_1\leq\cdots\leq\lambda_N$.
We write
\[
\widetilde V^\star=\Phi^\top V^\star
\]
for the graph Fourier representation of the true velocity field, and $\widetilde V^\star_k\in\mathbb R^q$ for its $k$-th row.
\subsubsection{Exact bias--variance decomposition}
\label{subsec:exact-risk}

The regularized estimation error admits the deterministic--stochastic decomposition. The first term is the graph-induced bias and the second term is the filtered initial estimation noise.

\begin{theorem}
\label{thm:exact-spectral-risk}
Assume $\mathbb E[\Xi]=0$ and $\operatorname{Cov}\!\left(\operatorname{vec}(\Xi)\right) = \sigma^2 I_{Nq}$. Then, we have the graph-spectral risk decomposition
\begin{align}
\mathcal R_{\mathrm{global}}(\rho;L,V^\star) 
&:= \mathbb E \left[ \|\widehat V_\rho-V^\star\|_F^2 \right] \nonumber \\
&\ =\sum_{k=1}^N \left( \frac{\rho\lambda_k}{1+\rho\lambda_k} \right)^2 \|\widetilde V^\star_k\|_2^2 + 
q\sigma^2 \sum_{k=1}^N \frac{1}{(1+\rho\lambda_k)^2}.
\label{eq:exact-spectral-risk}
\end{align}
The first term in Eq.~\eqref{eq:exact-spectral-risk} is the squared spectral attenuation bias, and the second term is the filtered noise variance.
\end{theorem}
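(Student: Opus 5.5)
The plan is to start from the error decomposition \eqref{eq:error-decomposition},
\[
\widehat V_\rho - V^\star = (S_\rho - I)V^\star + S_\rho \Xi,
\]
expand the squared Frobenius norm, and take expectations. The expansion gives three terms: $\|(S_\rho-I)V^\star\|_F^2$, the cross term $2\operatorname{tr}\bigl(((S_\rho-I)V^\star)^\top S_\rho\Xi\bigr)$, and $\|S_\rho\Xi\|_F^2$. The cross term is linear in $\Xi$, and $S_\rho$ and $V^\star$ are fixed under the conditioning adopted in Subsection~\ref{sec:graph-risk}. Since $\mathbb E[\Xi]=0$, its expectation vanishes. This reduces the risk to a deterministic bias term plus the expected filtered noise energy.

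For the bias term, I will use the eigendecomposition $L=\Phi\Lambda\Phi^\top$ together with \eqref{eq:spectral-smoothing-operator}, which give
\[
S_\rho - I = \Phi\bigl((I+\rho\Lambda)^{-1}-I\bigr)\Phi^\top = -\Phi\,\operatorname{diag}\!\left(\tfrac{\rho\lambda_k}{1+\rho\lambda_k}\right)\Phi^\top .
\]
Because $\Phi$ is orthogonal, the Frobenius norm is invariant under left multiplication by $\Phi^\top$. Hence
\[
\|(S_\rho-I)V^\star\|_F^2=\Bigl\|\operatorname{diag}\!\bigl(\tfrac{\rho\lambda_k}{1+\rho\lambda_k}\bigr)\widetilde V^\star\Bigr\|_F^2 .
\]
A diagonal matrix scales the $k$-th row $\widetilde V^\star_k$ by its $k$-th entry. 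Summing the squared row norms then yields the first sum in \eqref{eq:exact-spectral-risk}.

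For the variance term, I will work column by column:
\[
\mathbb E\|S_\rho\Xi\|_F^2=\sum_{r=1}^q \mathbb E\|S_\rho\Xi_{\cdot r}\|_2^2=\sum_{r=1}^q \operatorname{tr}\bigl(S_\rho \Sigma_r S_\rho\bigr).
\]
This uses the identity $\mathbb E[\xi^\top A\xi]=\operatorname{tr}(A\operatorname{Cov}(\xi))$ for a centered random vector $\xi$, together with \eqref{eq:regularized-coordinate-covariance}. The isotropic assumption $\operatorname{Cov}(\operatorname{vec}\Xi)=\sigma^2 I_{Nq}$ implies $\Sigma_r=\sigma^2 I_N$ for every $r$. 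Each summand is therefore $\sigma^2\operatorname{tr}(S_\rho^2)=\sigma^2\sum_k (1+\rho\lambda_k)^{-2}$, by \eqref{eq:spectral-smoothing-operator} and invariance of the trace under orthogonal conjugation. Summing over the $q$ columns gives the factor $q\sigma^2$.

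There is no substantive obstacle. The only point needing care is justifying that the cross term vanishes, which requires $S_\rho$ and $V^\star$ to be deterministic given the conditioning on $X_1,\ldots,X_N$ and $L$. I will state this conditioning explicitly. Note also that only the diagonal blocks $\Sigma_r$ of the covariance enter the computation, so the result would hold under the weaker assumption $\Sigma_r=\sigma^2 I_N$ for every $r$.
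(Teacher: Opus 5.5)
Your proposal is correct and follows the paper's proof essentially step by step: the same error decomposition, the cross term vanishing because $\mathbb E[\Xi]=0$ with $S_\rho$ and $V^\star$ fixed, and the same orthogonal-invariance computation of the bias. The only difference is that you compute the variance column by column via $\operatorname{tr}(S_\rho\Sigma_r S_\rho)$, whereas the paper uses $\operatorname{vec}(\Xi)^\top(I_q\otimes S_\rho^2)\operatorname{vec}(\Xi)$; this is the same computation, and your remark that only the diagonal blocks $\Sigma_r=\sigma^2 I_N$ are needed is correct, since $I_q\otimes S_\rho^2$ is block diagonal.
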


\begin{proof}
See Appendix~\ref{secA1}.
\end{proof}

The exact risk decomposition in
Theorem~\ref{thm:exact-spectral-risk} shows that graph regularization
acts through two opposing mechanisms: it suppresses stochastic variation
while attenuating nonconstant components of the true velocity field.
The following proposition summarizes the corresponding spectral
properties.

\begin{corollary}[Spectral contraction and monotone bias--variance behavior]
\label{prop:spectral-bias-variance-properties}
Let
\[
S_\rho=(I+\rho L)^{-1},
\qquad
L=\Phi\Lambda\Phi^\top,
\qquad
\Lambda=\operatorname{diag}(\lambda_1,\ldots,\lambda_N),
\]
where \(L\) is symmetric positive semidefinite and \(\rho\geq0\).
Then the following statements hold.

\begin{enumerate}
    \item The graph smoother is nonexpansive:
    \[
    \|S_\rho\|_2
    =
    \max_{1\leq k\leq N}
    \frac{1}{1+\rho\lambda_k}
    \leq 1.
    \]
    If \(\rho>0\), then \(S_\rho\) is a strict contraction on
    \(\ker(L)^\perp\).

    \item Under the isotropic noise assumption of
    Theorem~\ref{thm:exact-spectral-risk}, the variance term
    \[
    \mathcal V(\rho)
    =
    q\sigma^2\operatorname{tr}(S_\rho^2)
    =
    q\sigma^2
    \sum_{k=1}^N
    \frac{1}{(1+\rho\lambda_k)^2}
    \]
    is nonincreasing, with
    \begin{equation}
    \mathcal V'(\rho)
    =
    -2q\sigma^2
    \sum_{k=1}^N
    \frac{\lambda_k}{(1+\rho\lambda_k)^3}
    \leq 0.
    \label{eq:variance-derivative}
    \end{equation}
    If \(L\neq0\) and \(\sigma^2>0\), then
    \(\mathcal V'(\rho)<0\) for every \(\rho\geq0\).

    \item The squared-bias term
    \[
    \mathcal B(\rho)
    =
    \sum_{k=1}^N
    \left(
        \frac{\rho\lambda_k}{1+\rho\lambda_k}
    \right)^2
    \|\widetilde V_k^\star\|_2^2
    \]
    is nondecreasing, with
    \begin{equation}
    \mathcal B'(\rho)
    =
    2
    \sum_{k=1}^N
    \frac{\rho\lambda_k^2}
    {(1+\rho\lambda_k)^3}
    \|\widetilde V_k^\star\|_2^2
    \geq0.
    \label{eq:bias-derivative}
    \end{equation}
    For \(\rho>0\), the inequality is strict whenever
    \(LV^\star\neq0\).
\end{enumerate}
\end{corollary}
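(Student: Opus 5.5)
The plan is to reduce everything to scalar functions of the eigenvalues via the spectral representation \eqref{eq:spectral-smoothing-operator}, $S_\rho=\Phi(I+\rho\Lambda)^{-1}\Phi^\top$, and then invoke Theorem~\ref{thm:exact-spectral-risk} for the explicit forms of the variance and bias terms.

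\textbf{Contraction.} Since $\Phi$ is orthogonal and $S_\rho$ is symmetric with eigenvalues $(1+\rho\lambda_k)^{-1}$, its spectral norm is the largest of these. Because $\lambda_k\ge0$ and $\rho\ge0$, each eigenvalue lies in $(0,1]$, so $\|S_\rho\|_2\le1$.

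For strict contraction, note that $\ker(L)^\perp$ is spanned by the $\phi_k$ with $\lambda_k>0$. This subspace is invariant under $S_\rho$. Restricted to it, the operator norm is $\max_{\lambda_k>0}(1+\rho\lambda_k)^{-1}=(1+\rho\lambda_{\min}^+)^{-1}<1$ when $\rho>0$, where $\lambda_{\min}^+$ is the smallest positive eigenvalue. If $L=0$, the subspace is trivial and the statement is vacuous.

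\textbf{Variance.} Theorem~\ref{thm:exact-spectral-risk} gives $\mathcal V(\rho)=q\sigma^2\sum_k(1+\rho\lambda_k)^{-2}$, which equals $q\sigma^2\operatorname{tr}(S_\rho^2)$ by the spectral form. Differentiating termwise,
\[
\frac{d}{d\rho}(1+\rho\lambda_k)^{-2}=-2\lambda_k(1+\rho\lambda_k)^{-3}.
\]
This gives \eqref{eq:variance-derivative}, and each summand is $\le0$.

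If $L\neq0$, then some $\lambda_k>0$ because $L$ is symmetric. That summand is strictly negative for every $\rho\ge0$, and with $\sigma^2>0$ we get $\mathcal V'(\rho)<0$.

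\textbf{Bias.} Write $f_k(\rho)=\bigl(\rho\lambda_k/(1+\rho\lambda_k)\bigr)^2$. Then $g(\rho)=\rho\lambda_k/(1+\rho\lambda_k)$ has $g'(\rho)=\lambda_k/(1+\rho\lambda_k)^2$, so
\[
f_k'(\rho)=2g g'=2\rho\lambda_k^2/(1+\rho\lambda_k)^3\ge0.
\]
This yields \eqref{eq:bias-derivative}.

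For strictness, the main point is to translate $LV^\star\neq0$ into a spectral condition. We have $\Phi^\top LV^\star=\Lambda\widetilde V^\star$, whose $k$-th row is $\lambda_k\widetilde V^\star_k$. So $LV^\star\neq0$ if and only if there is some $k$ with $\lambda_k>0$ and $\widetilde V^\star_k\neq0$. For that $k$ and $\rho>0$, the summand $2\rho\lambda_k^2\|\widetilde V^\star_k\|_2^2/(1+\rho\lambda_k)^3$ is strictly positive, and the remaining summands are nonnegative, so $\mathcal B'(\rho)>0$.

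\textbf{Main obstacle.} None of these steps is substantive. The only care needed is in the edge cases: the restriction of the contraction claim to $\ker(L)^\perp$ when $L=0$, and the equivalence between $LV^\star\neq0$ and a nonzero graph Fourier coefficient at a positive eigenvalue.
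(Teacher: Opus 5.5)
Your proposal is correct and follows the same route as the paper: read off the eigenvalues $(1+\rho\lambda_k)^{-1}\in(0,1]$ of $S_\rho$ from the spectral decomposition, differentiate the spectral formulas of Theorem~\ref{thm:exact-spectral-risk} term by term, and reduce each strictness claim to the existence of a positive eigenvalue (for the bias, one carrying a nonzero Fourier coefficient $\widetilde V_k^\star$). Your bound $(1+\rho\lambda_{\min}^+)^{-1}<1$ on $\ker(L)^\perp$ and the identity $\Phi^\top LV^\star=\Lambda\widetilde V^\star$ just make explicit two steps the paper states in words.
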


\begin{proof}
The eigenvalues of \(S_\rho\) are $\frac{1}{1+\rho\lambda_k}$, where $k=1,\ldots,N$. They belong to \((0,1]\), which proves the nonexpansiveness statement. On \(\ker(L)^\perp\), all relevant eigenvalues of \(L\) are positive, so the corresponding eigenvalues of \(S_\rho\) are strictly smaller than one whenever \(\rho>0\).

The expressions for \(\mathcal V'(\rho)\) and \(\mathcal B'(\rho)\) follow by differentiating their respective spectral representations term by term. Strict decrease of the variance holds when at least one eigenvalue of \(L\) is positive. Strict increase of the squared bias holds when \(V^\star\) has a nonzero component in an eigenspace associated with a positive eigenvalue, which is equivalent to \(LV^\star\neq0\).
\end{proof}

Corollary~\ref{prop:spectral-bias-variance-properties} formalizes the
basic bias--variance trade-off. Increasing \(\rho\) suppresses the
nonconstant graph-frequency components of the estimation noise, but
the same spectral attenuation also removes nonconstant components of
the true velocity field. Nevertheless, sufficiently weak
regularization is always beneficial under nontrivial isotropic noise.

The graph-induced bias can also be controlled by the graph smoothness
of the true velocity field.

\begin{corollary}[Dirichlet-energy control of the regularization bias]
\label{prop:dirichlet-bias-bound}
For every \(\rho\geq0\),
\begin{equation}
\|(S_\rho-I)V^\star\|_F^2
\leq
\rho^2\|LV^\star\|_F^2
\leq
\rho^2\lambda_{\max}(L)
\operatorname{tr}
\left[
    (V^\star)^\top LV^\star
\right].
\label{eq:dirichlet-bias-bound}
\end{equation}
\end{corollary}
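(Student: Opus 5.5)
The proof rests on the resolvent identity already recorded in Eq.~\eqref{eq:graph-induced-bias}, namely $(S_\rho-I)V^\star=-\rho S_\rho L V^\star$, and on the nonexpansiveness of $S_\rho$ from Corollary~\ref{prop:spectral-bias-variance-properties}. The second inequality is a separate spectral bound on $\|LV^\star\|_F^2$.

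For the first inequality, take Frobenius norms in the resolvent identity and use $\|AB\|_F\le\|A\|_2\|B\|_F$:
\[
\|(S_\rho-I)V^\star\|_F\le\rho\|S_\rho\|_2\|LV^\star\|_F\le\rho\|LV^\star\|_F ,
\]
since $\|S_\rho\|_2\le1$ by part~1 of Corollary~\ref{prop:spectral-bias-variance-properties}. Squaring gives $\|(S_\rho-I)V^\star\|_F^2\le\rho^2\|LV^\star\|_F^2$. Equivalently, in the graph Fourier basis of Theorem~\ref{thm:exact-spectral-risk} the left side equals $\sum_k\bigl(\rho\lambda_k/(1+\rho\lambda_k)\bigr)^2\|\widetilde V_k^\star\|_2^2$, and each weight is at most $\rho^2\lambda_k^2$ because $1+\rho\lambda_k\ge1$. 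The case $\rho=0$ is trivial, since both sides vanish.

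For the second inequality, write $\|LV^\star\|_F^2=\operatorname{tr}\bigl[(V^\star)^\top L^2V^\star\bigr]$, which equals $\sum_k\lambda_k^2\|\widetilde V_k^\star\|_2^2$ in the spectral coordinates. Because $0\le\lambda_k\le\lambda_{\max}(L)$, we have $\lambda_k^2\le\lambda_{\max}(L)\lambda_k$ for every $k$. Summing gives
\[
\|LV^\star\|_F^2\le\lambda_{\max}(L)\sum_k\lambda_k\|\widetilde V_k^\star\|_2^2=\lambda_{\max}(L)\operatorname{tr}\bigl[(V^\star)^\top LV^\star\bigr].
\]
The last equality holds because $\operatorname{tr}(V^\top LV)=\operatorname{tr}(\widetilde V^\top\Lambda\widetilde V)$ by orthogonality of $\Phi$. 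A matrix form of the same step is $L^2=L^{1/2}LL^{1/2}\preceq\lambda_{\max}(L)L$.

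No step is a real obstacle. The only points needing care are using the correct submultiplicativity ($\|AB\|_F\le\|A\|_2\|B\|_F$) and noting that $\lambda_k\ge0$, which is needed for $\lambda_k^2\le\lambda_{\max}\lambda_k$.
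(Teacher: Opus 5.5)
Your proof is correct and follows the paper's argument: the resolvent identity $S_\rho-I=-\rho S_\rho L$ together with $\|S_\rho\|_2\le 1$ gives the first inequality, and $L^2\preceq\lambda_{\max}(L)L$ gives the second. Your spectral-coordinate version is only an equivalent rewriting of the same two steps.
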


\begin{proof}
Using
\[
S_\rho-I=-\rho S_\rho L
\]
and Corollary~\ref{prop:spectral-bias-variance-properties},
\[
\|(S_\rho-I)V^\star\|_F
\leq
\rho\|S_\rho\|_2\|LV^\star\|_F
\leq
\rho\|LV^\star\|_F.
\]
Squaring gives the first inequality. For the second, since
\[
L^2\preceq\lambda_{\max}(L)L,
\]
we have
\[
\|LV^\star\|_F^2
=
\operatorname{tr}
\left[
    (V^\star)^\top L^2V^\star
\right]
\leq
\lambda_{\max}(L)
\operatorname{tr}
\left[
    (V^\star)^\top LV^\star
\right].
\]
\end{proof}

The bound shows that regularization introduces little bias when the true velocity field has small graph Dirichlet energy. Conversely, a large value of $\operatorname{tr} \left[(V^\star)^\top LV^\star \right]$ indicates that the graph connects cells with substantially different true velocities, in which case graph smoothing may strongly distort the underlying biological signal.

\subsubsection{Numerical validation of the bias--variance decomposition}
\label{subsec:numerical-bias-variance}

We first performed a controlled synthetic experiment to validate the exact
bias--variance decomposition derived in
Theorem~\ref{thm:exact-spectral-risk}. A synthetic data set was used rather
than a real single-cell data set because the true velocity field
\(V^\star\) must be known in order to separately evaluate the graph-induced
bias and the filtered noise variance.

We generated \(N=200\) cells along a smooth one-dimensional trajectory embedded in \(\mathbb{R}^2\). Specifically, for $t_i=\frac{i-1}{N-1}$, $i=1,\ldots,N$,  the cell-state coordinates were defined by
\begin{equation}
X_i = \gamma(t_i) =
\begin{pmatrix}
t_i\\
0.35\sin(2\pi t_i)
\end{pmatrix}.
\label{eq:synthetic-cell-trajectory}
\end{equation}
This construction produces a smooth, nonbranching developmental trajectory, shown in Fig.~\ref{fig:bias-variance-validation}(a). The graph was constructed from these cell-state coordinates using a symmetric weighted \(k\)-nearest-neighbor graph with \(k=10\). For neighboring cells, the graph weights were defined using a Gaussian kernel,
\begin{equation}
w_{ij} = \exp \left( -\frac{\|X_i-X_j\|_2^2}{2h^2}\right),
\label{eq:synthetic-graph-weight}
\end{equation}
where \(h\) was chosen as the median distance among the retained nearest-neighbor edges. The corresponding unnormalized graph Laplacian was \(L=D-W\).

To obtain a geometrically interpretable velocity field, we defined the true velocity at each cell as the tangent vector of the underlying trajectory, i.e.,
\begin{equation}
V_i^\star = \gamma'(t_i) =
\begin{pmatrix}
1\\
0.7\pi\cos(2\pi t_i)
\end{pmatrix}.
\label{eq:synthetic-true-velocity}
\end{equation}
Thus, the true velocity vectors are aligned with the direction of progression
along the synthetic trajectory. The initial velocity estimate was generated
according to
\begin{equation}
\widehat V^{(0)} = V^\star+\Xi,
\qquad
\Xi_{ir} \overset{\mathrm{iid}}{\sim} \mathcal{N}(0,\sigma^2),
\label{eq:synthetic-noisy-velocity}
\end{equation}
with noise standard deviation \(\sigma=0.6\). A representative realization of the noisy initial velocity field is also displayed in Fig.~\ref{fig:bias-variance-validation}(a). Whereas the true velocity vectors follow the local tangent direction, the noisy vectors exhibit substantial cell-to-cell fluctuations in both magnitude and orientation.

For each regularization parameter $\rho\in[10^{-4},10^3]$, we computed the graph-regularized estimator $\widehat V_\rho = (I+\rho L)^{-1}\widehat V^{(0)}$. The theoretical squared bias was evaluated as $\mathcal{B}(\rho) = \frac{1}{Nq} \left\| (S_\rho-I)V^\star \right\|_F^2$ with $S_\rho=(I+\rho L)^{-1}$. The theoretical variance was evaluated as $\mathcal{V}(\rho) = \frac{\sigma^2}{N} \operatorname{tr}(S_\rho^2)$, where \(q=2\) is the velocity dimension. Equivalently, using the eigendecomposition
\(L=\Phi\Lambda\Phi^\top\), these quantities were computed from
\begin{align}
\mathcal{B}(\rho) &= \frac{1}{Nq} \sum_{k=1}^{N} \left( \frac{\rho\lambda_k}          {1+\rho\lambda_k} \right)^2 \left\| \widetilde V_k^\star \right\|_2^2, 
\label{eq:synthetic-spectral-bias} \\ 
\mathcal{V}(\rho) &= \frac{\sigma^2}{N} \sum_{k=1}^{N} \frac{1}{(1+\rho\lambda_k)^2}.
\label{eq:synthetic-spectral-variance}
\end{align}
Their sum, $\mathcal{R}(\rho)=\mathcal{B}(\rho)+\mathcal{V}(\rho)$, is the theoretical mean squared error per velocity coordinate.

We additionally estimated the risk by Monte Carlo simulation. The cell locations, graph Laplacian, and true velocity field were held fixed, while \(M=500\) independent noise matrices $\Xi^{(1)},\ldots,\Xi^{(M)}$ were generated according to Eq.~\eqref{eq:synthetic-noisy-velocity}. For each realization and each value of \(\rho\), we computed $\widehat V_\rho^{(m)} = S_\rho \left( V^\star+\Xi^{(m)} \right)$, and approximated the expected risk by
\begin{equation}
\widehat{\mathcal{R}}_{\mathrm{MC}}(\rho) 
= \frac{1}{M} \sum_{m=1}^{M} \frac{1}{Nq} \left\| \widehat V_\rho^{(m)}-V^\star \right\|_F^2.
\label{eq:monte-carlo-risk}
\end{equation}
The Monte Carlo values are represented by open circles in Fig.~\ref{fig:bias-variance-validation}(b).

\begin{figure}[t]
\centering
\includegraphics[width=0.95\textwidth]{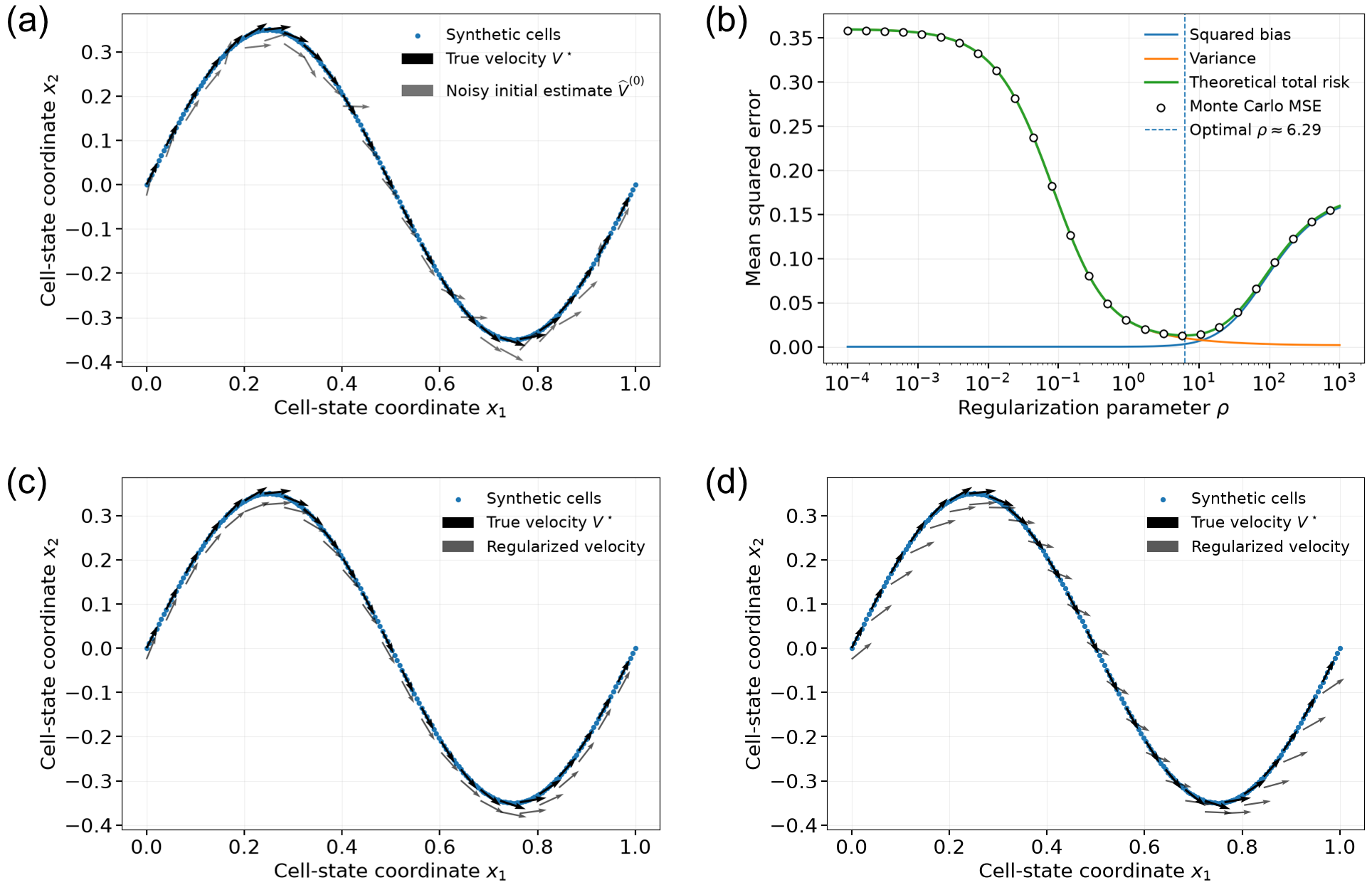}
\caption{
Numerical validation of the exact graph-spectral bias--variance decomposition.
(\textbf{a}) Synthetic cell-state trajectory, weighted $10$-nearest-neighbor graph,
true velocity field, and one realization of the noisy initial velocity estimate.
(\textbf{b}) Theoretical squared bias, variance, total risk, and empirical Monte
Carlo mean squared error computed from $500$ independent noise realizations.
(\textbf{c}) Graph-regularized velocity field for the near-optimal regularization
parameter $\rho=6.29$.
(\textbf{d}) Graph-regularized velocity field for strong regularization
($\rho=100$).
}
\label{fig:bias-variance-validation}
\end{figure}

Figure~\ref{fig:bias-variance-validation}(b) shows the expected
bias--variance trade-off. At \(\rho=0\), the estimator is unbiased, and the
total risk is entirely determined by the noise variance. As \(\rho\)
increases, the variance decreases monotonically because the graph smoother
attenuates an increasing number of nonconstant graph-frequency components.
In contrast, the squared bias increases because the same attenuation also
removes components of the true velocity field.

For small positive values of \(\rho\), the decrease in variance dominates the
increase in squared bias, and the total risk falls below the unregularized
risk. This observation is consistent with the local risk result that the
variance decreases to first order at \(\rho=0\), whereas the squared bias
increases only to second order. The total risk therefore attains an interior
minimum at a positive regularization parameter. Beyond this minimum, further
smoothing produces progressively larger distortion of the true velocity
field, and the increase in bias eventually outweighs the remaining variance
reduction.

The Monte Carlo estimates closely follow the exact theoretical risk curve
over the full range of regularization parameters. This agreement numerically
confirms the graph-spectral risk decomposition and shows that the observed
U-shaped risk profile is explained by the opposing effects of noise
suppression and signal attenuation. The experiment also demonstrates that
graph regularization is beneficial only within an intermediate range of
regularization strengths: insufficient smoothing leaves substantial
estimation noise, whereas excessive smoothing suppresses genuine variation
in the velocity field.

To further illustrate the effect of the regularization parameter on the
estimated velocity field, we compare the graph-regularized velocities obtained from the same noisy initial estimate
under two different values of \(\rho\).

When the regularization parameter is chosen near the theoretical optimum
(\(\rho=6.29\); Fig.~\ref{fig:bias-variance-validation}(c)), most of the
high-frequency fluctuations introduced by the observation noise are removed,
while the overall spatial pattern of the true velocity field is well
preserved. The regularized vectors remain closely aligned with the local
tangent direction of the underlying trajectory, indicating that graph
smoothing primarily suppresses stochastic variation without introducing
substantial distortion.

In contrast, when the regularization parameter is increased to
\(\rho=100\) (Fig.~\ref{fig:bias-variance-validation}(d)), the estimated velocity
field becomes noticeably over-smoothed. Although the local variability is
further reduced, the velocity vectors no longer accurately follow the
underlying spatial variation along the trajectory. Instead, neighboring
velocities become excessively similar, demonstrating the increasing influence
of graph-induced bias predicted by the theoretical analysis.

These observations provide a direct visual interpretation of the
bias--variance trade-off shown in
Fig.~\ref{fig:bias-variance-validation}(b). Moderate graph regularization
improves estimation accuracy by reducing variance while introducing only a
small bias, whereas excessive regularization eventually suppresses genuine
biological variation and increases the overall estimation error.

\subsection{Branch-contrast attenuation in two-block graphs}
\label{sec:branch-leakage}

\subsubsection{Exact attenuation and persistent asymptotic bias}

The global risk decomposition does not distinguish whether the graph-induced bias originates from smoothing within a lineage or from mixing distinct developmental lineages. To isolate the latter mechanism, we consider an exactly solvable two-branch model.

Let the vertex set be partitioned into two nonempty daughter branches,
\[
\mathcal V = \mathcal I_1 \cup \mathcal I_2, \qquad \mathcal I_1 \cap \mathcal I_2 = \varnothing, 
\] 
with $|\mathcal I_1| = n_1$ and $|\mathcal I_2| = n_2.$

Assume that the true velocity is constant within each branch, i.e.,
\begin{equation}
V_i^\star =
\begin{cases}
\mu_1, & i\in\mathcal I_1,\\
\mu_2, & i\in\mathcal I_2,
\end{cases}
\qquad \mu_1,\mu_2\in\mathbb R^q,
\label{eq:block-constant-velocity}
\end{equation}
and define the true branch contrast by $\Delta^\star = \mu_1-\mu_2$.

Suppose that every pair of vertices belonging to different branches is connected with weight $\frac{\beta}{n_1n_2}$, where $\beta\ge0$ denotes the total cross-branch edge mass. The within-branch graph may be arbitrary, provided that each branch remains connected.

To isolate the deterministic distortion introduced by graph regularization from stochastic estimation noise, we study the mean regularized estimator $\overline V_\rho = \mathbb E[\widehat V_\rho].$ The following theorem gives an exact expression for the attenuation of the lineage contrast caused by graph regularization.
\begin{theorem}[Exact two-block branch-contrast attenuation]
\label{thm:two-block-attenuation}
Assume $\mathbb E[\Xi]=0$. Let
\[
\overline V_\rho = \mathbb E[\widehat V_\rho] = S_\rho V^\star.
\]
Then $\overline V_\rho$ remains constant within each branch, and its branch contrast satisfies
\begin{equation}
\Delta_{12}(\overline V_\rho) = a_\rho(\beta;n_1,n_2) \Delta^\star,
\label{eq:block-contrast-attenuation}
\end{equation}
where
\begin{equation}
a_\rho(\beta;n_1,n_2) = \frac{ 1 }{ 1+ \rho\beta \left( n_1^{-1} + n_2^{-1} \right) }.
\label{eq:block-attenuation-factor}
\end{equation}
Consequently,
\begin{equation}
\left\| \Delta_{12}(\overline V_\rho) - \Delta^\star \right\|_2^2 = \left[ \frac{ \rho\beta (n_1^{-1}+n_2^{-1}) }{ 1+ \rho\beta (n_1^{-1}+n_2^{-1}) } \right]^2 \|\Delta^\star\|_2^2.
\label{eq:block-contrast-bias}
\end{equation}
\end{theorem}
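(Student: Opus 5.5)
The plan is to produce an explicit candidate solution of the linear system \eqref{eq:regularized-linear-system} with right-hand side $V^\star$, and then invoke uniqueness of the solution. Since $I+\rho L$ is positive definite for $\rho\ge0$, it is invertible, so $\overline V_\rho=S_\rho V^\star$ is the unique solution of $(I+\rho L)\overline V=V^\star$. Linearity of expectation together with $\mathbb E[\Xi]=0$ gives $\mathbb E[\widehat V_\rho]=S_\rho V^\star$ immediately.

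The first step is to take the ansatz $\overline V_i=m_1$ for $i\in\mathcal I_1$ and $\overline V_i=m_2$ for $i\in\mathcal I_2$, with unknowns $m_1,m_2\in\mathbb R^q$, and compute $L\overline V$ using the decomposition \eqref{eq:laplacian-decomposition}. The within-branch Laplacian annihilates any branch-constant matrix, because each of its rows is a combination of differences of values inside a single branch. Connectivity of each branch is not needed for this, although it does guarantee uniqueness within the blocks. For the cross part, take $i\in\mathcal I_1$. Then
\[
(L_{\mathrm{cross}}\overline V)_i=\sum_{j\in\mathcal I_2}\frac{\beta}{n_1n_2}(m_1-m_2)=\frac{\beta}{n_1}(m_1-m_2),
\]
and symmetrically for $i\in\mathcal I_2$ the value is $\frac{\beta}{n_2}(m_2-m_1)$. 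Hence the system reduces to two vector equations:
\[
m_1+\frac{\rho\beta}{n_1}(m_1-m_2)=\mu_1,
\qquad
m_2+\frac{\rho\beta}{n_2}(m_2-m_1)=\mu_2.
\]

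Subtracting these gives
\[
\bigl(1+\rho\beta(n_1^{-1}+n_2^{-1})\bigr)(m_1-m_2)=\Delta^\star,
\]
which yields \eqref{eq:block-attenuation-factor}. The solution is then completed using the weighted mean $n_1m_1+n_2m_2=n_1\mu_1+n_2\mu_2$, obtained by adding $n_1$ times the first equation to $n_2$ times the second. This determines $m_1$ and $m_2$ explicitly, so the ansatz does solve the system, and by uniqueness $\overline V_\rho$ is branch-constant. This establishes both the first claim and \eqref{eq:block-contrast-attenuation}.

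Finally, \eqref{eq:block-contrast-bias} follows from
\[
\Delta_{12}(\overline V_\rho)-\Delta^\star=(a_\rho-1)\Delta^\star,
\qquad
1-a_\rho=\frac{\rho\beta c}{1+\rho\beta c},
\qquad c=n_1^{-1}+n_2^{-1}.
\]

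There is no real obstacle here. The only point needing care is justifying branch-constancy, which the uniqueness argument handles. It also requires $\Delta_{12}(\cdot)$ to be read as the difference of the common branch values, which is well defined once constancy is established.
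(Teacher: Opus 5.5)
Your proof is correct and is essentially the paper's argument: both evaluate $L$ on branch-constant matrices, reduce $(I+\rho L)\overline V_\rho=V^\star$ to the same $2\times 2$ system, and subtract the two equations to obtain $a_\rho(\beta;n_1,n_2)$. The only difference is how you justify that $\overline V_\rho$ is branch-constant: you exhibit an explicit branch-constant solution and invoke uniqueness from the invertibility of $I+\rho L$, whereas the paper shows that $L$, and hence $(I+\rho L)^{-1}$, leaves the branch-constant subspace invariant; your route makes that inverse-invariance step explicit rather than implicit.
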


\begin{proof}
By the observation model $\widehat V^{(0)}=V^\star+\Xi$ and the assumption $\mathbb E[\Xi]=0$, we have
\[
\overline V_\rho = \mathbb E[\widehat V_\rho] = S_\rho\mathbb E[\widehat V^{(0)}] = S_\rho V^\star.
\]
Here the graph Laplacian $L$, and hence $S_\rho$, is treated as fixed.

Let $\mathcal U\subseteq\mathbb R^{N\times q}$ denote the subspace of matrices that are constant within each block. Since $V^\star$ is block-constant, $V^\star\in\mathcal U$.  For any $V\in\mathcal U$, let $u_1,u_2\in\mathbb R^q$ denote its two block values. If $i\in\mathcal I_1$, then
\begin{align*}
(LV)_i = \sum_{j\in\mathcal I_1}w_{ij}(u_1-u_1) + \sum_{j\in\mathcal I_2} \frac{\beta}{n_1n_2}(u_1-u_2) = \frac{\beta}{n_1}(u_1-u_2).
\end{align*}
Similarly, if $i\in\mathcal I_2$, then $(LV)_i = \frac{\beta}{n_2}(u_2-u_1)$. Thus $L\mathcal U\subseteq\mathcal U$. Consequently, $(I+\rho L)^{-1}$ also preserves $\mathcal U$, and hence $\overline V_\rho=S_\rho V^\star$ is constant within each block.

Let $u_1,u_2\in\mathbb R^q$ denote the two block values of $\overline V_\rho$. Since
\[
(I+\rho L)\overline V_\rho=V^\star,
\]
the equations on the two blocks reduce to
\begin{align}
u_1+\rho\frac{\beta}{n_1}(u_1-u_2) &= \mu_1, 
\label{eq:block-reduced-system-1}\\ 
u_2+\rho\frac{\beta}{n_2}(u_2-u_1) &= \mu_2.
\label{eq:block-reduced-system-2}
\end{align}
Subtracting Eq.~\eqref{eq:block-reduced-system-2} from Eq.~\eqref{eq:block-reduced-system-1} gives
\[
\left[ 1+\rho\beta \left( n_1^{-1}+n_2^{-1} \right) \right] (u_1-u_2) = \mu_1-\mu_2.
\]
Since
\[
\Delta_{12}(\overline V_\rho)=u_1-u_2 \qquad\text{and}\qquad \Delta^\star=\mu_1-\mu_2,
\]
Eq.~\eqref{eq:block-contrast-attenuation} follows.

Finally,
\[
\Delta_{12}(\overline V_\rho)-\Delta^\star = \left( a_\rho(\beta;n_1,n_2)-1 \right)\Delta^\star,
\]
and
\[
a_\rho(\beta;n_1,n_2)-1 = - \frac{ \rho\beta(n_1^{-1}+n_2^{-1}) }{ 1+\rho\beta(n_1^{-1}+n_2^{-1}) }.
\]
Taking squared Euclidean norms proves
Eq.~\eqref{eq:block-contrast-bias}.
\end{proof}

Theorem~\ref{thm:two-block-attenuation} shows that graph regularization does not merely reduce estimation variance; it also attenuates the true velocity contrast between daughter lineages through a simple multiplicative factor determined by the regularization strength and the effective cross-branch connectivity.

Next, we investigate the asymptotic consequence of persistent cross-branch connectivity.

\begin{theorem}[Persistent cross-connectivity implies persistent bias]
\label{thm:persistent-block-bias}
Consider a sequence of two-block problems indexed by $N$, with true branch contrast $\Delta_N^\star$. Suppose $\|\Delta_N^\star\|_2 \ge \delta_v > 0$, and define the effective cross-branch conductance $c_N = \beta_N ( n_{1,N}^{-1} + n_{2,N}^{-1} )$. If $\liminf_{N\rightarrow\infty} \rho_Nc_N \ge c_0 > 0$, then
\begin{equation}
\liminf_{N\rightarrow\infty} \left\| \Delta_{12} \left( \mathbb E [ \widehat V_{\rho_N} ] \right) - \Delta_N^\star \right\|_2^2 \ge \left( \frac{c_0}{1+c_0} \right)^2 \delta_v^2.
\label{eq:persistent-block-bias}
\end{equation}
\end{theorem}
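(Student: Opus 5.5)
The plan is to apply Theorem~\ref{thm:two-block-attenuation} for each fixed $N$ and then pass to the limit using monotonicity of the map $t\mapsto t/(1+t)$. For each $N$ the two-block structure holds with parameters $(\beta_N,n_{1,N},n_{2,N})$ and regularization $\rho_N$. Eq.~\eqref{eq:block-contrast-bias} then gives the exact identity
\[
\left\|\Delta_{12}\bigl(\mathbb E[\widehat V_{\rho_N}]\bigr)-\Delta_N^\star\right\|_2^2
=
\left(\frac{\rho_Nc_N}{1+\rho_Nc_N}\right)^2\|\Delta_N^\star\|_2^2 .
\]
We have $\rho_N\ge0$ and $c_N\ge0$. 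The function $f(t)=\bigl(t/(1+t)\bigr)^2$ is continuous and nondecreasing on $[0,\infty)$, and $\|\Delta_N^\star\|_2^2\ge\delta_v^2$. Hence the right-hand side is bounded below by $f(\rho_Nc_N)\,\delta_v^2$.

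Next we handle the $\liminf$. Fix $\varepsilon\in(0,c_0)$. By the hypothesis $\liminf_N\rho_Nc_N\ge c_0$, there is $N_0$ with $\rho_Nc_N\ge c_0-\varepsilon$ for all $N\ge N_0$. Monotonicity of $f$ then gives, for $N\ge N_0$, the bound $\|\cdot\|_2^2\ge f(c_0-\varepsilon)\delta_v^2$. This yields $\liminf_N\|\cdot\|_2^2\ge f(c_0-\varepsilon)\delta_v^2$. Letting $\varepsilon\downarrow0$ and using continuity of $f$ gives $f(c_0)\delta_v^2=\bigl(c_0/(1+c_0)\bigr)^2\delta_v^2$, which is Eq.~\eqref{eq:persistent-block-bias}. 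The case $\liminf\rho_Nc_N=\infty$ is covered automatically, since the bound only uses $\rho_Nc_N\ge c_0-\varepsilon$ eventually.

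There is no genuine obstacle here. The only points requiring care are these. First, we must check that the hypotheses of Theorem~\ref{thm:two-block-attenuation} hold at each $N$: nonempty connected branches, uniform cross weights $\beta_N/(n_{1,N}n_{2,N})$, block-constant $V^\star$, and $\mathbb E[\Xi]=0$. Second, we need the elementary monotonicity of $t/(1+t)$, which follows from $t/(1+t)=1-1/(1+t)$. Third, the $\liminf$ argument must be done through an $\varepsilon$-margin rather than by naively substituting the limit, since $\rho_Nc_N$ need not converge.
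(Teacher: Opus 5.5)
Your proposal is correct and is essentially the paper's proof: both apply Eq.~\eqref{eq:block-contrast-bias} at each $N$ with $x_N=\rho_Nc_N$, bound $\|\Delta_N^\star\|_2^2$ below by $\delta_v^2$, and pass to the lower limit using continuity and monotonicity of $f(x)=\bigl(x/(1+x)\bigr)^2$. Your $\varepsilon$-margin step just spells out the paper's one-line claim that $\liminf_N x_N\ge c_0$ implies $\liminf_N f(x_N)\ge f(c_0)$.
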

\begin{proof}
For the \(N\)-th two-block problem, define
\[
x_N = \rho_N c_N = \rho_N\beta_N \left( n_{1,N}^{-1}+n_{2,N}^{-1} \right).
\]
By Eq.~\eqref{eq:block-contrast-bias},
\begin{align}
\left\| \Delta_{12} \left( \mathbb E[\widehat V_{\rho_N}] \right) - \Delta_N^\star \right\|_2^2 = \left( \frac{x_N}{1+x_N} \right)^2 \|\Delta_N^\star\|_2^2.
\label{eq:persistent-bias-proof-identity}
\end{align}
Since $\|\Delta_N^\star\|_2\geq\delta_v$, it follows that
\[
\left\| \Delta_{12} \left( \mathbb E[\widehat V_{\rho_N}] \right) - \Delta_N^\star \right\|_2^2 \geq \left( \frac{x_N}{1+x_N} \right)^2 \delta_v^2.
\]

The function
\[
f(x) = \left( \frac{x}{1+x} \right)^2, \qquad x\geq0,
\]
is continuous and nondecreasing. Therefore, the assumption
\[
\liminf_{N\to\infty}x_N = \liminf_{N\to\infty}\rho_Nc_N \geq c_0
\]
implies
\[
\liminf_{N\to\infty}f(x_N) \geq f(c_0) = \left( \frac{c_0}{1+c_0} \right)^2.
\]
Taking the lower limit in
Eq.~\eqref{eq:persistent-bias-proof-identity} yields
\[
\liminf_{N\to\infty} \left\| \Delta_{12} \left( \mathbb E[\widehat V_{\rho_N}] \right) - \Delta_N^\star \right\|_2^2 \geq \left( \frac{c_0}{1+c_0} \right)^2 \delta_v^2,
\]
which proves the claim.
\end{proof}

The theorem identifies the dimensionless quantity $\rho_Nc_N$ as the effective strength of cross-branch smoothing. If \(\rho_Nc_N\to0\), the branch-contrast bias vanishes in the two-block model. In contrast, if \(\rho_Nc_N\) remains bounded away from zero, increasing the number of cells does not eliminate the systematic attenuation of the daughter-branch contrast. Thus, asymptotic branch preservation requires either a decreasing regularization scale or a graph construction for which the effective cross-branch conductance tends to zero.

\subsubsection{Numerical validation of the two-block theory}
\label{subsec:two-block-numerical-validation}

We next use synthetic two-block data to illustrate the finite-sample and
asymptotic consequences of cross-branch graph coupling. The purpose of
these experiments is not to reproduce a specific biological dataset, but
to isolate the mechanism identified by
Theorem~\ref{thm:two-block-attenuation} and
Theorem~\ref{thm:persistent-block-bias} under a controlled graph model.

For the finite-dimensional experiments, we consider two branches
$\mathcal I_1$ and $\mathcal I_2$ of equal size with $n_1=n_2=15$. The true velocity field is block-constant, i.e.,
\[
V_i^\star = \mu_1, \qquad i\in\mathcal I_1,
\]
and
\[
V_i^\star = \mu_2, \qquad i\in\mathcal I_2,
\]
where $\mu_1=(0.90,0.40)^\top$ and $\mu_2=(-0.80,-0.50)^\top$.
Consequently, the true daughter-branch contrast is $\Delta^\star = \mu_1-\mu_2 \neq0$.

\begin{figure}[t]
\centering
\includegraphics[width=0.98\textwidth]{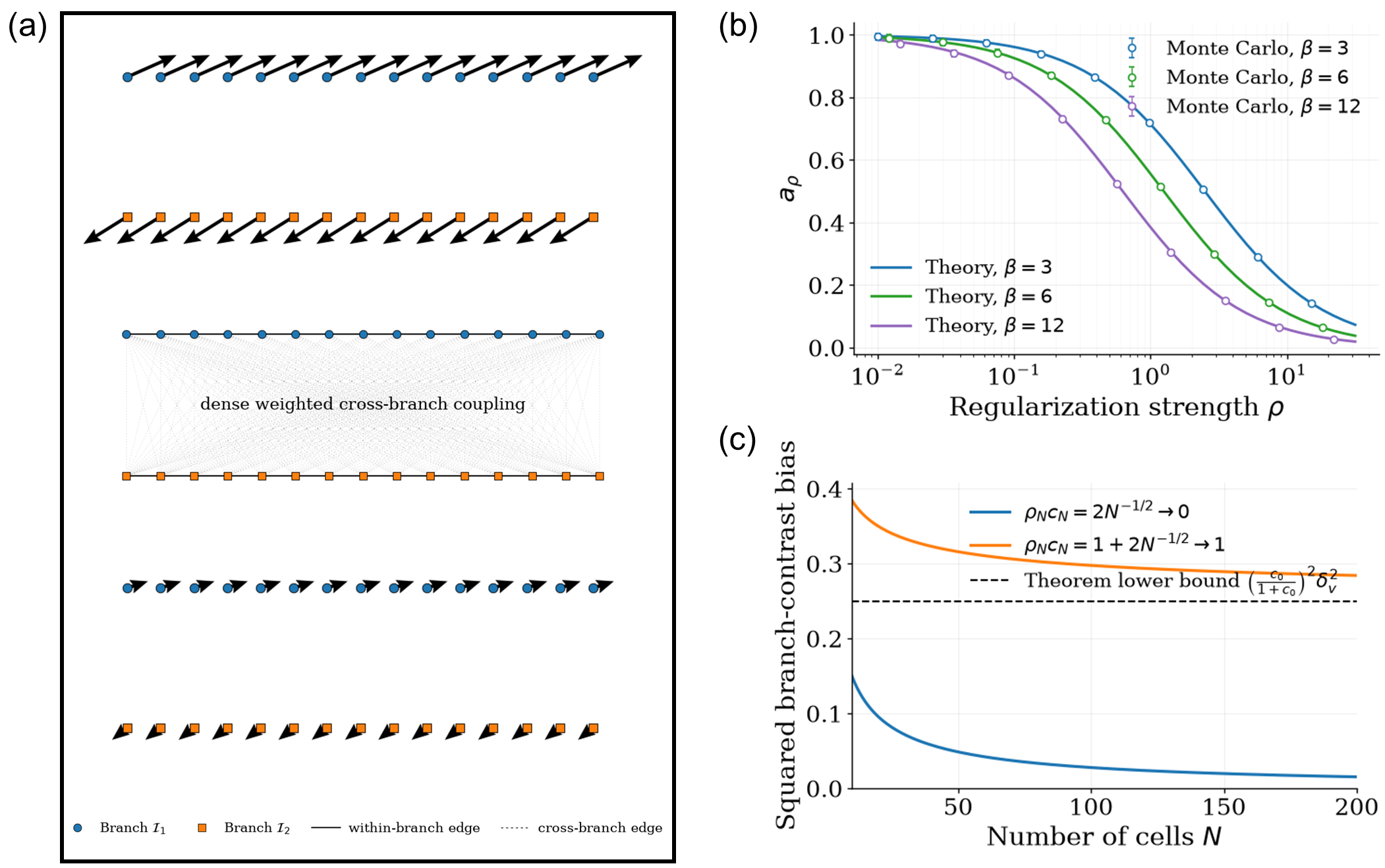}
\caption{
Numerical validation of the theoretical results for the weighted
two-block model.
(a) Illustration of the synthetic two-block model, including the true
block-constant velocity field (top), the weighted two-block graph
(middle), and the corresponding mean regularized velocity field
(bottom).
(b) Exact branch-contrast attenuation factor
$a_\rho(\beta;n_1,n_2)$
compared with Monte Carlo simulations for
$\beta\in\{3,6,12\}$.
(c) Squared branch-contrast bias under two asymptotic scaling regimes,
illustrating
Theorem~\ref{thm:persistent-block-bias}.
}
\label{fig:two-block-validation}
\end{figure}

Within each branch, consecutive cells are connected by unit-weight edges. Across branches, every cell in $\mathcal I_1$ is connected to every cell in $\mathcal I_2$ with common edge weight $\frac{\beta}{n_1n_2}$, so that the total cross-branch weight equals $\beta$. The resulting graph therefore consists of two connected within-branch subgraphs together with dense weighted cross-branch coupling. The effective cross-branch conductance is $c = \beta \left( \frac{1}{n_1} + \frac{1}{n_2} \right)$.

Figure~\ref{fig:two-block-validation}(a) summarizes the complete finite-dimensional construction. The top diagram shows the true block-constant velocity field before graph regularization. Cells in the two branches have distinct velocity vectors and hence a nonzero branch contrast. The middle diagram shows the weighted two-block graph. The solid edges represent within-branch connectivity, whereas the dense cross-branch edges couple the two daughter branches. The bottom diagram shows the mean regularized velocity field $\overline V_\rho = \mathbb E[\widehat V_\rho] = (I+\rho L)^{-1}V^\star$. The branch-specific velocity vectors are pulled toward one another after regularization, producing a visibly smaller daughter-branch contrast. For the parameters used in the illustration, we set $\beta=12$ and $\rho=1$. The numerical ratio $\frac{ \left\| \Delta_{12}(\overline V_\rho) \right\|_2 }{ \left\| \Delta^\star \right\|_2 }$ agrees, up to numerical precision, with the exact attenuation factor $a_\rho(\beta;n_1,n_2) = \frac{1}{ 1+\rho\beta \left( n_1^{-1}+n_2^{-1} \right) }.$ Thus, the visual reduction in branch separation in Figure~\ref{fig:two-block-validation}(a) is the deterministic cross-branch bias predicted by Theorem~\ref{thm:two-block-attenuation}.

To verify the attenuation formula in the presence of estimation noise, we next generate initial velocity estimates according to $\widehat V^{(0)} = V^\star+\Xi$, where the entries of $\Xi$ are independent centered Gaussian random variables with standard deviation $\sigma=0.60$.

For each value of $\rho$, we compute $\widehat V_\rho = (I+\rho L)^{-1}\widehat V^{(0)}$ over $500$ independent Monte Carlo replications. For each replication, the regularized branch contrast is projected onto the direction of the true contrast, and the resulting scalar attenuation factors are averaged.

Figure~\ref{fig:two-block-validation}(b) compares these Monte Carlo estimates with the exact expression $a_\rho(\beta;n_1,n_2)$ for $\beta\in\{3,6,12\}$. The empirical estimates closely follow the analytical curves throughout the full range of regularization strengths. The agreement confirms that the theoretical attenuation factor describes the mean effect of graph regularization even when the initial velocity field is contaminated by substantial noise. Moreover, increasing $\beta$ shifts the attenuation curve toward smaller values of $\rho$. Thus, stronger cross-branch connectivity causes biologically meaningful daughter-branch contrasts to be lost under weaker regularization.

Finally, we examine the asymptotic statement in Theorem~\ref{thm:persistent-block-bias}. For clarity, we take $\|\Delta_N^\star\|_2=\delta_v=1$ and compare two sequences of effective cross-branch smoothing strengths. In the vanishing regime, we set $\rho_Nc_N = 2N^{-1/2} \to 0.$ The corresponding squared branch-contrast bias is $\left( \frac{2N^{-1/2}}{1+2N^{-1/2}} \right)^2$, which converges to zero. In the persistent regime, we set $\rho_Nc_N = 1+2N^{-1/2} \to 1$. The corresponding bias is $\left( \frac{1+2N^{-1/2}}{2+2N^{-1/2}} \right)^2$, which converges to $\left( \frac{1}{1+1} \right)^2 = \frac14.$

These two behaviors are shown in Figure~\ref{fig:two-block-validation}(c). When $\rho_Nc_N\to0$, increasing the number of cells eliminates the branch-contrast bias. In contrast, when $\rho_Nc_N\to c_0>0$, the bias approaches the positive lower bound $\left( \frac{c_0}{1+c_0} \right)^2 \delta_v^2.$ Hence, increasing sample size alone does not guarantee branch preservation. Consistency requires the combined strength of graph regularization and effective cross-branch conductance to vanish. These experiments therefore support both the finite-dimensional attenuation formula and the persistent-bias mechanism established above.

\subsection{Bifurcation geometry and asymptotic branch error}
\label{sec:bifurcation-results}

We now formulate an explicit geometric model showing how a proximity-based cell-state graph can retain harmful cross-branch connections near a developmental bifurcation. We then relate this geometric phenomenon to a deterministic quotient model in which persistent cross-branch conductance produces a nonvanishing branch-contrast error.

\subsubsection{Persistent cross-branch connectivity and asymptotic branch error}
\label{subsec:bifurcation-connectivity-error}

Let the latent lineage consist of a parent branch
\(\mathcal M_0\) and two daughter branches
\(\mathcal M_+\) and \(\mathcal M_-\), embedded in
\(\mathbb R^p\). We parameterize the daughter branches by
\begin{equation}
\gamma_{\pm,N}(s) =
\begin{pmatrix}
s\\
\pm\varepsilon_N+\theta_\pm s\\
0_{p-2}
\end{pmatrix},
\qquad
s\in[0,\ell],
\label{eq:y-branch-parameterization}
\end{equation}
where $\varepsilon_N\geq0, \theta_+,\theta_-\in\mathbb R$, and $\theta_+\neq\theta_-$. The quantity \(2\varepsilon_N\) is the ambient separation between the two daughter branches at \(s=0\), whereas \(\theta_+\) and \(\theta_-\) determine their local tangent directions.

For each cell \(i\), let $B_i\in\{+,-\}$ denote its daughter-branch label, with $\mathbb P(B_i=+)=\pi_+$ and $\mathbb P(B_i=-)=\pi_-$, where $\pi_+>0, \pi_->0$, and $\pi_++\pi_-=1$. Conditional on \(B_i=b\), let \(S_i\) have density \(f_b\) on \([0,\ell]\), and define $X_i=\gamma_{b,N}(S_i)$. Assume that there exist constants $0<f_{\min}\leq f_{\max}<\infty$ and \(s_0>0\) such that
\begin{equation}
f_{\min} \leq f_b(s) \leq f_{\max}, \qquad s\in[0,s_0], \quad b\in\{+,-\}.
\label{eq:density-bounds-near-branch-point}
\end{equation}

Let \(h_N>0\) be the graph bandwidth. We construct the standard
bandwidth kernel graph using $w_{ij}^{\mathrm{std}} = K\left( \frac{\|X_i-X_j\|_2}{h_N} \right)$, where \(K:[0,\infty)\to[0,\infty)\) satisfies
\begin{equation}
0\leq K(r)\leq K_{\max}<\infty,
\qquad
r\geq0,
\label{eq:kernel-upper-bound}
\end{equation}
and
\begin{equation}
K(r)\geq k_0>0,
\qquad
0\leq r\leq1.
\label{eq:kernel-lower-bound}
\end{equation}
Thus, any pair of cells separated by at most \(h_N\) receives an edge weight bounded below by \(k_0\).

For a constant \(a>0\), define the local bifurcation index set $\mathcal I_N^{\mathrm{loc}} = \left\{ i: S_i\in[0,ah_N] \right\}$. This parameter-based definition avoids ambiguity caused by possible overlap of the two embedded daughter branches. Define the local cross-branch edge mass by
\begin{equation}
C_N^{\mathrm{cross}} =
\sum_{\substack{
1\leq i<j\leq N\\
i,j\in\mathcal I_N^{\mathrm{loc}}\\
B_i\neq B_j
}}
w_{ij}^{\mathrm{std}},
\label{eq:cross-edge-mass}
\end{equation}
and the corresponding within-daughter edge mass by
\begin{equation}
C_N^{\mathrm{within}} =
\sum_{\substack{
1\leq i<j\leq N\\
i,j\in\mathcal I_N^{\mathrm{loc}}\\
B_i=B_j
}}
w_{ij}^{\mathrm{std}}.
\label{eq:within-edge-mass}
\end{equation}

The following result shows that increasing the number of sampled cells does not necessarily make the local proximity graph branch preserving.

\begin{theorem}[Persistent cross-branch connectivity]
\label{thm:persistent-cross-edges-complete}
Assume that $h_N\to 0, Nh_N\to \infty$, and $\frac{\varepsilon_N}{h_N} \to c_\varepsilon \in \left[0,\frac12\right)$. Then there exist constants \(a>0\) and \(c_{\mathrm{cross}}>0\), independent of \(N\), such that
\begin{equation}
\mathbb P \left[ \frac{ C_N^{\mathrm{cross}} }{ C_N^{\mathrm{within}} + C_N^{\mathrm{cross}} } \geq c_{\mathrm{cross}} \right] \to 1.
\label{eq:persistent-cross-edge-ratio-complete}
\end{equation}
In particular, the fraction of local graph mass assigned to cross-branch edges does not converge to zero.
\end{theorem}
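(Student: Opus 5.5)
The plan is to bound the numerator of the ratio in Eq.~\eqref{eq:persistent-cross-edge-ratio-complete} from below and the denominator from above, both in terms of the local daughter counts
\[
n_\pm=\#\{i\in\mathcal I_N^{\mathrm{loc}}: B_i=\pm\}.
\]
The key geometric step is to choose \(a>0\) small enough that \emph{every} local cross-branch pair lies within distance \(h_N\). The kernel lower bound \eqref{eq:kernel-lower-bound} then gives each such pair weight at least \(k_0\). The ratio reduces to a ratio of pair counts, and its limit is controlled by concentration of binomial counts.

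\emph{Geometric step.} Take \(i,j\in\mathcal I_N^{\mathrm{loc}}\) with \(B_i=+\), \(B_j=-\), \(S_i=s\) and \(S_j=t\), where \(s,t\in[0,ah_N]\). By Eq.~\eqref{eq:y-branch-parameterization},
\[
\|X_i-X_j\|_2^2=(s-t)^2+\bigl(2\varepsilon_N+\theta_+s-\theta_-t\bigr)^2\le a^2h_N^2+\bigl(2\varepsilon_N+\Theta a h_N\bigr)^2,
\]
with \(\Theta=|\theta_+|+|\theta_-|\). Since \(\varepsilon_N/h_N\to c_\varepsilon<\tfrac12\), fix \(\eta>0\) with \(2c_\varepsilon+2\eta<1\). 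For all large \(N\) we have \(2\varepsilon_N\le(2c_\varepsilon+\eta)h_N\). Now choose \(a\) so small that
\[
a^2+(2c_\varepsilon+\eta+\Theta a)^2\le 1.
\]
This is possible because at \(a=0\) the left side equals \((2c_\varepsilon+\eta)^2<1\). With this choice, \(\|X_i-X_j\|_2\le h_N\), so \(w^{\mathrm{std}}_{ij}\ge k_0\) and \(C_N^{\mathrm{cross}}\ge k_0\,n_+n_-\). For the denominator, the upper bound \eqref{eq:kernel-upper-bound} gives
\[
C_N^{\mathrm{within}}+C_N^{\mathrm{cross}}\le K_{\max}\binom{n_++n_-}{2}\le \tfrac12K_{\max}(n_++n_-)^2.
\]
The hypothesis \(c_\varepsilon<\tfrac12\) is used only in this step, and it is precisely what allows the gap \(2\varepsilon_N\) to be bridged at scale \(h_N\).

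\emph{Probabilistic step.} This is where the real content lies, although it is routine. The cells are i.i.d., so \(n_\pm\sim\mathrm{Bin}(N,p_{\pm,N})\) with \(p_{\pm,N}=\pi_\pm\int_0^{ah_N}f_\pm\). Since \(h_N\to0\), eventually \(ah_N\le s_0\), and then \eqref{eq:density-bounds-near-branch-point} gives
\[
\pi_\pm f_{\min}ah_N\le p_{\pm,N}\le \pi_\pm f_{\max}ah_N.
\]
The means \(Np_{\pm,N}\) are therefore of order \(Nh_N\to\infty\). By Chebyshev's inequality (the variance is at most the mean), or by a Chernoff bound, \(\tfrac12Np_{\pm,N}\le n_\pm\le 2Np_{\pm,N}\) with probability tending to one. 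On this event, \(n_+n_->0\) and
\[
\frac{C_N^{\mathrm{cross}}}{C_N^{\mathrm{within}}+C_N^{\mathrm{cross}}}\ge\frac{2k_0\,n_+n_-}{K_{\max}(n_++n_-)^2}\ge\frac{2k_0\cdot\frac14\pi_+\pi_-f_{\min}^2}{K_{\max}\cdot4(\pi_++\pi_-)^2f_{\max}^2}=:c_{\mathrm{cross}}.
\]
This constant is positive and independent of \(N\), which proves Eq.~\eqref{eq:persistent-cross-edge-ratio-complete}.

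The main obstacle is the geometric choice of \(a\). It must be uniform in \(N\), and it must absorb both the offset \(2\varepsilon_N\) and the tangent divergence \(\Theta a h_N\) within a single bandwidth. Everything else is a standard application of concentration.
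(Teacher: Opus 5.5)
Your proposal is correct and follows the paper's proof essentially step for step. It uses the same choice of a small $a$ so that every local cross-branch pair lies within distance $h_N$ and therefore has weight at least $k_0$, the same binomial local counts with density bounds on $p_{\pm,N}$, and the same bound $k_0 n_+n_-/(\tfrac12 K_{\max}(n_++n_-)^2)$ on the ratio. The differences are cosmetic: you use Chebyshev with per-branch upper bounds where the paper uses Chernoff and a bound on the total count, and you reach the same constant $k_0\pi_+\pi_-f_{\min}^2/(8K_{\max}f_{\max}^2)$.
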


\begin{proof}
See Appendix~\ref{secA2}.
\end{proof}

Theorem~\ref{thm:persistent-cross-edges-complete} is a geometric result -- it shows that the observed graph can retain a positive proportion of cross-branch edge mass inside a shrinking neighborhood of the bifurcation. To describe the statistical consequence of such connectivity, we next introduce an exact local quotient model.

Let the daughter-branch cells be partitioned into local bins 
\[
\mathcal I_{r,+,N}, \qquad \mathcal I_{r,-,N}, \qquad r=1,\ldots,m_N,
\]
with $\mathcal I_{r,+,N} \cap \mathcal I_{r,-,N} = \varnothing$. Define
\[
n_{r,+,N} = |\mathcal I_{r,+,N}|, \qquad n_{r,-,N} = |\mathcal I_{r,-,N}|, \qquad n_{r,N} = n_{r,+,N}+n_{r,-,N},
\]
and assume $n_{r,+,N}\geq1$ and $n_{r,-,N}\geq1$. After ordering the vertices in bin \(r\) so that the \(+\)-branch vertices precede the \(-\)-branch vertices, define the local block-mean operator
\begin{equation}
M_{r,N} =
\begin{bmatrix}
n_{r,+,N}^{-1}\mathbf 1_{n_{r,+,N}}^\top & 0
\\[1mm]
0 & n_{r,-,N}^{-1}\mathbf 1_{n_{r,-,N}}^\top
\end{bmatrix}
\in \mathbb R^{2\times n_{r,N}},
\label{eq:local-block-mean-operator}
\end{equation}
and the local lifting operator
\begin{equation}
J_{r,N} =
\begin{bmatrix}
\mathbf 1_{n_{r,+,N}} & 0
\\[1mm]
0 & \mathbf 1_{n_{r,-,N}}
\end{bmatrix}
\in
\mathbb R^{n_{r,N}\times2}.
\label{eq:local-block-lifting-operator}
\end{equation}
Then, we have $M_{r,N}J_{r,N}=I_2$.

Let $d=(1, -1)^\top$. For any $V_r\in\mathbb R^{n_{r,N}\times q}$, define the local daughter-branch contrast by $\Delta_{r,N}(V_r) = d^\top M_{r,N}V_r \in \mathbb R^{1\times q}$. We identify this row vector with the corresponding element of \(\mathbb R^q\) when taking Euclidean norms.

Assume that the true velocity field is block-constant within each local
bin, i.e.,
\begin{equation}
V_{r,N}^\star = J_{r,N}U_{r,N}^\star,
\qquad
U_{r,N}^\star =
\begin{bmatrix}
\mu_{r,+,N}^\top\\[1mm]
\mu_{r,-,N}^\top
\end{bmatrix}
\in
\mathbb R^{2\times q},
\label{eq:local-block-constant-signal}
\end{equation}
where $\mu_{r,+,N}, \mu_{r,-,N} \in \mathbb R^q$. The true local contrast is
\begin{equation}
\Delta_{r,N}^\star = \Delta_{r,N}(V_{r,N}^\star) = \mu_{r,+,N} - \mu_{r,-,N}.
\label{eq:true-local-contrast}
\end{equation}

Let \(L_{r,N}\in\mathbb R^{n_{r,N}\times n_{r,N}}\) denote the
Laplacian of the weighted subgraph induced by $\mathcal I_{r,+,N} \cup \mathcal I_{r,-,N}$. Edges connecting bin \(r\) to cells outside that bin are not included
in \(L_{r,N}\). Thus, we have $L_{r,N}=L_{r,N}^\top$ and $L_{r,N}\succeq0$.

Assume that the block-constant subspace $\operatorname{range}(J_{r,N})$ is invariant under \(L_{r,N}\), with
\begin{equation}
L_{r,N}J_{r,N} = J_{r,N}\overline L_{r,N},
\label{eq:exact-quotient-intertwining}
\end{equation}
where
\begin{equation}
\overline L_{r,N} =
\begin{bmatrix}
\beta_{r,N}/n_{r,+,N} & -\beta_{r,N}/n_{r,+,N}
\\[1mm]
-\beta_{r,N}/n_{r,-,N} & \beta_{r,N}/n_{r,-,N}
\end{bmatrix}
\in \mathbb R^{2\times2}
\label{eq:local-quotient-laplacian}
\end{equation}
for some $\beta_{r,N}\geq0$. Here \(\beta_{r,N}\) is the total cross-branch edge mass in bin \(r\). The exact intertwining condition requires uniform cross-branch weighted degree within each of the two local blocks. Within-branch edges may otherwise be arbitrary because they vanish on block-constant signals.

Define the effective local quotient conductance by $c_{r,N} = \beta_{r,N} \left( n_{r,+,N}^{-1} + n_{r,-,N}^{-1} \right)$. Throughout the remainder of this result, the sampled cell states, the bin partition, the graph, and the loss weights are treated as fixed, and all expectations are conditional on these quantities. Assume the conditionally unbiased observation model
\begin{equation}
\widehat V_{r,N}^{(0)} = V_{r,N}^\star+\Xi_{r,N},
\qquad
\mathbb E[\Xi_{r,N}]=0.
\label{eq:local-observation-model}
\end{equation}
For a regularization parameter $\rho_N\geq0$, define
\begin{equation}
\widehat V_{r,N,\rho_N} = (I_{n_{r,N}}+\rho_NL_{r,N})^{-1} \widehat V_{r,N}^{(0)}.
\label{eq:local-regularized-estimator}
\end{equation}

Let $\omega_{r,N}\geq0$, where $r=1,\ldots,m_N$, be deterministic weights satisfying $\sum_{r=1}^{m_N}\omega_{r,N}=1$. Define the weighted branch-contrast loss by
\begin{equation}
\mathcal L_{\mathrm{contrast},N} \left( \widehat V_{\rho_N}, V_N^\star \right) = \sum_{r=1}^{m_N} \omega_{r,N} \left\| \Delta_{r,N} \left( \widehat V_{r,N,\rho_N} \right) - \Delta_{r,N}^\star \right\|_2^2.
\label{eq:weighted-contrast-loss}
\end{equation}

\begin{theorem}[Nonvanishing branch-contrast error under persistent quotient conductance]
\label{thm:rigorous-quotient-inconsistency}
Suppose that there exist constants $\omega_0>0, \delta_v>0$, and $c_0>0$, such that for all sufficiently large \(N\), there exists an index set $\mathcal R_N \subseteq \{1,\ldots,m_N\}$ satisfying
\begin{align}
\sum_{r\in\mathcal R_N}\omega_{r,N}
&\geq
\omega_0,
\label{eq:positive-affected-mass}
\\
\|\Delta_{r,N}^\star\|_2
&\geq
\delta_v,
\qquad
r\in\mathcal R_N,
\label{eq:uniform-local-contrast}
\\
\rho_Nc_{r,N}
&\geq
c_0,
\qquad
r\in\mathcal R_N.
\label{eq:uniform-persistent-conductance}
\end{align}
Then, for all sufficiently large \(N\),
\begin{equation}
\mathbb E \left[ \mathcal L_{\mathrm{contrast},N} \left( \widehat V_{\rho_N}, V_N^\star \right) \right] \geq \omega_0 \left( \frac{c_0}{1+c_0} \right)^2 \delta_v^2.
\label{eq:finite-n-contrast-lower-bound}
\end{equation}
Consequently,
\begin{equation}
\liminf_{N\to\infty} \mathbb E \left[ \mathcal L_{\mathrm{contrast},N} \left( \widehat V_{\rho_N}, V_N^\star \right) \right] \geq \omega_0 \left( \frac{c_0}{1+c_0} \right)^2 \delta_v^2 > 0.
\label{eq:asymptotic-contrast-lower-bound}
\end{equation}
Hence the estimator is not mean-square consistent with respect to the weighted branch-contrast loss.
\end{theorem}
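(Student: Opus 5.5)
The plan is to reduce the expected weighted loss to a sum of deterministic per-bin biases, and then apply, bin by bin, the attenuation computation of Theorem~\ref{thm:two-block-attenuation} through the quotient intertwining \eqref{eq:exact-quotient-intertwining}.

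\textbf{Step 1 (bias lower bound).} Fix a bin $r$. The map $V\mapsto \Delta_{r,N}(V)-\Delta_{r,N}^\star$ is affine in $V$. By Jensen's inequality, i.e.\ $\mathbb E\|Y\|_2^2\ge\|\mathbb E Y\|_2^2$, we get
\[
\mathbb E\bigl\|\Delta_{r,N}(\widehat V_{r,N,\rho_N})-\Delta_{r,N}^\star\bigr\|_2^2\ \ge\ \bigl\|\Delta_{r,N}(\overline V_{r,N})-\Delta_{r,N}^\star\bigr\|_2^2 .
\]
Here $\overline V_{r,N}=\mathbb E[\widehat V_{r,N,\rho_N}]=S_{r}V_{r,N}^\star$ with $S_r=(I+\rho_NL_{r,N})^{-1}$, using \eqref{eq:local-observation-model}. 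This inequality discards the nonnegative variance term, so no second-moment assumption on the noise is needed.

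\textbf{Step 2 (quotient reduction).} From \eqref{eq:exact-quotient-intertwining} we have
\[
(I+\rho_NL_{r,N})J_{r,N}=J_{r,N}(I_2+\rho_N\overline L_{r,N}).
\]
The $2\times2$ matrix $I_2+\rho_N\overline L_{r,N}$ is invertible: it has eigenvalues $1$ and $1+\rho_Nc_{r,N}$, since $\overline L_{r,N}$ has eigenvalues $0$ and $c_{r,N}$. Hence
\[
S_rJ_{r,N}=J_{r,N}(I_2+\rho_N\overline L_{r,N})^{-1},
\]
and therefore $\overline V_{r,N}=J_{r,N}\overline U_{r,N}$ with $\overline U_{r,N}=(I_2+\rho_N\overline L_{r,N})^{-1}U^\star_{r,N}$. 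Using $M_{r,N}J_{r,N}=I_2$ gives $\Delta_{r,N}(\overline V_{r,N})=d^\top \overline U_{r,N}$.

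\textbf{Step 3 (contrast equation).} A direct check shows $d^\top\overline L_{r,N}=c_{r,N}\,d^\top$. Applying $d^\top$ to $(I_2+\rho_N\overline L_{r,N})\overline U_{r,N}=U^\star_{r,N}$ then yields
\[
(1+\rho_Nc_{r,N})\,d^\top\overline U_{r,N}=\Delta^\star_{r,N}.
\]
This is exactly the subtraction step in the proof of Theorem~\ref{thm:two-block-attenuation}. Consequently the per-bin bias equals
\[
\Bigl(\tfrac{\rho_Nc_{r,N}}{1+\rho_Nc_{r,N}}\Bigr)^2\|\Delta^\star_{r,N}\|_2^2 .
\]

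\textbf{Step 4 (aggregation).} Since all weights $\omega_{r,N}$ are nonnegative, we may drop the bins outside $\mathcal R_N$. On $\mathcal R_N$, the function $x\mapsto (x/(1+x))^2$ is nondecreasing, so \eqref{eq:uniform-persistent-conductance} and \eqref{eq:uniform-local-contrast} bound each remaining term below by $(c_0/(1+c_0))^2\delta_v^2$. Summing with weights and applying \eqref{eq:positive-affected-mass} gives \eqref{eq:finite-n-contrast-lower-bound} for all large $N$. Taking $\liminf$ gives \eqref{eq:asymptotic-contrast-lower-bound}. Since the lower bound is strictly positive, the expected loss cannot tend to zero, which is the claimed lack of mean-square consistency.

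The main point needing care is Step 2. The intertwining relation is stated for $L_{r,N}$, but the argument needs it for the resolvent, which requires invertibility of the quotient operator on the block-constant subspace; the eigenvalue computation above settles this. Beyond that, the remaining steps are routine.
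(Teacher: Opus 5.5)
Your proposal is correct and follows essentially the same route as the paper's proof. Both arguments use the resolvent intertwining $(I+\rho_NL_{r,N})^{-1}J_{r,N}=J_{r,N}(I_2+\rho_N\overline L_{r,N})^{-1}$, reduce to the $2\times2$ quotient system to get the attenuation factor $(1+\rho_Nc_{r,N})^{-1}$, apply a Jensen/bias--variance lower bound, and aggregate over $\mathcal R_N$ using monotonicity of $x\mapsto x/(1+x)$. The differences are only cosmetic: you apply Jensen first, and you derive the contrast equation from the left-eigenvector identity $d^\top\overline L_{r,N}=c_{r,N}\,d^\top$ rather than by subtracting the two rows. As the paper does, you should also state explicitly that $I+\rho_NL_{r,N}$ is invertible because $L_{r,N}\succeq0$ and $\rho_N\ge0$.
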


\begin{proof}
See Appendix~\ref{secA3}.
\end{proof}

The two preceding theorems separate the geometric and statistical components of the inconsistency mechanism. Theorem~\ref{thm:persistent-cross-edges-complete} shows that a bandwidth proximity graph may retain a nonvanishing fraction of cross-branch edge mass near the bifurcation. Theorem~\ref{thm:rigorous-quotient-inconsistency} shows that whenever this connectivity induces a persistent effective quotient conductance on a nonnegligible collection of local bins, the expected branch-contrast loss remains bounded away from zero.

The quotient result is exact and deterministic. A random proximity graph will generally satisfy the intertwining relation Eq.~\eqref{eq:exact-quotient-intertwining} only approximately. Transferring the conclusion to the full random graph therefore requires an additional approximation or resolvent-perturbation argument controlling the discrepancy between the empirical local graph and its quotient representation.

\begin{appendices}
\section{Proof of Theorem~\ref{thm:exact-spectral-risk}}
\label{secA1}

\begin{proof}
Define $A_\rho = (S_\rho-I)V^\star$ and $Z_\rho = S_\rho\Xi$. Then, we have
\[
\widehat V_\rho-V^\star = A_\rho+Z_\rho.
\]
Using the Frobenius inner product
\[
\langle A,B\rangle_F = \operatorname{tr}(A^\top B),
\]
we obtain
\begin{align}
\left\| \widehat V_\rho-V^\star \right\|_F^2 & =
\|A_\rho+Z_\rho\|_F^2 \nonumber\\
& = \|A_\rho\|_F^2 + \|Z_\rho\|_F^2 + 2\langle A_\rho,Z_\rho\rangle_F
\nonumber\\
& = \left\| (S_\rho-I)V^\star \right\|_F^2 + \left\| S_\rho\Xi \right\|_F^2 +
2\operatorname{tr} \left( \bigl[(S_\rho-I)V^\star\bigr]^\top S_\rho\Xi \right).
\label{eq:risk-expansion-proof}
\end{align}

Because $L$ is symmetric, $S_\rho=(I+\rho L)^{-1}$ is also symmetric. Consequently, the cross term can be written as
\begin{align}
\operatorname{tr} \left( \bigl[(S_\rho-I)V^\star\bigr]^\top S_\rho\Xi \right) 
& = \operatorname{tr} \left( (V^\star)^\top (S_\rho-I)^\top S_\rho\Xi \right) \nonumber\\
& = \operatorname{tr}\left( (V^\star)^\top (S_\rho-I) S_\rho\Xi \right).
\end{align}
The matrices \(V^\star\), \(S_\rho\), and \(L\) are fixed under the conditional framework. Therefore, by linearity of expectation,
\begin{align}
\mathbb{E} \left[ \operatorname{tr} \left( (V^\star)^\top (S_\rho-I) S_\rho\Xi \right) \right]
= \operatorname{tr} \left( (V^\star)^\top (S_\rho-I) S_\rho \mathbb E[\Xi] \right)= 0.
\label{eq:cross-term-zero-proof}
\end{align}
Taking expectations in \eqref{eq:risk-expansion-proof} therefore gives
\begin{equation}
\mathcal R_{\mathrm{global}}(\rho;L,V^\star) 
= \left\| (S_\rho-I)V^\star \right\|_F^2 
+ \mathbb E \left[ \left\| S_\rho\Xi \right\|_F^2 \right].
\label{eq:bias-variance-basic-proof}
\end{equation}

We first calculate the squared bias term. Since
\[
S_\rho = \Phi(I+\rho\Lambda)^{-1}\Phi^\top,
\]
we have
\begin{align}
S_\rho-I 
&= \Phi(I+\rho\Lambda)^{-1}\Phi^\top - \Phi I\Phi^\top \nonumber\\ 
&= \Phi \left[ (I+\rho\Lambda)^{-1}-I \right] \Phi^\top.
\label{eq:smoother-minus-identity-proof}
\end{align}
It follows that
\begin{align}
(S_\rho-I)V^\star 
&= \Phi \left[ (I+\rho\Lambda)^{-1}-I \right] \Phi^\top V^\star \nonumber\\ 
&= \Phi \left[ (I+\rho\Lambda)^{-1}-I \right] \widetilde V^\star.
\end{align}
Since multiplication by the orthogonal matrix \(\Phi\) preserves the Frobenius norm,
\begin{align}
\left\| (S_\rho-I)V^\star \right\|_F^2 = \left\| \left[ (I+\rho\Lambda)^{-1}-I \right] \widetilde V^\star \right\|_F^2.
\label{eq:bias-orthogonal-invariance-proof}
\end{align}
The diagonal entries of $(I+\rho\Lambda)^{-1}-I$ are $\frac{1}{1+\rho\lambda_k}-1 = -\frac{\rho\lambda_k}{1+\rho\lambda_k}$. Therefore, the \(k\)-th row of $\left[ (I+\rho\Lambda)^{-1}-I \right] \widetilde V^\star$ is $-\frac{\rho\lambda_k} {1+\rho\lambda_k} \widetilde V^\star_k$. Summing the squared Euclidean norms of the rows gives
\begin{align}
\left\| (S_\rho-I)V^\star \right\|_F^2 
&= \sum_{k=1}^N \left\| -\frac{\rho\lambda_k} {1+\rho\lambda_k} \widetilde V^\star_k \right\|_2^2 \nonumber\\ 
&= \sum_{k=1}^N \left( \frac{\rho\lambda_k} {1+\rho\lambda_k} \right)^2 \left\| \widetilde V^\star_k \right\|_2^2.
\label{eq:exact-bias-term-proof}
\end{align}

We next calculate the variance term. Using $\|A\|_F^2 = \operatorname{tr}(A^\top A)$, and the symmetry of \(S_\rho\), we obtain
\begin{align}
\left\| S_\rho\Xi \right\|_F^2 
&= \operatorname{tr} \left[ (S_\rho\Xi)^\top (S_\rho\Xi) \right] \nonumber\\ 
&= \operatorname{tr} \left( \Xi^\top S_\rho^\top S_\rho\Xi \right) \nonumber\\ 
&= \operatorname{tr} \left( \Xi^\top S_\rho^2\Xi \right).
\label{eq:variance-trace-form-proof}
\end{align}
For any \(N\times q\) matrix \(A\) and any \(N\times N\) matrix \(M\), the vectorization identity
\[
\operatorname{tr}(A^\top M A) = \operatorname{vec}(A)^\top (I_q\otimes M) \operatorname{vec}(A)
\]
holds. Applying this identity with \(A=\Xi\) and \(M=S_\rho^2\) yields
\begin{equation}
\left\| S_\rho\Xi \right\|_F^2 
= \operatorname{vec}(\Xi)^\top \left( I_q\otimes S_\rho^2 \right) \operatorname{vec}(\Xi).
\label{eq:variance-vectorization-proof}
\end{equation}

Let $\xi = \operatorname{vec}(\Xi)$. Because \(\mathbb E[\xi]=0\) and $\operatorname{Cov}(\xi) = \sigma^2I_{Nq}$, the standard quadratic-form identity 
\[ \mathbb E[\xi^\top A\xi] = \operatorname{tr} \left( A\operatorname{Cov}(\xi) \right) + \mathbb E[\xi]^\top A\mathbb E[\xi]
\]
gives
\begin{align}
\mathbb E \left[ \left\| S_\rho\Xi \right\|_F^2 \right] 
&= \operatorname{tr} \left[ \left( I_q\otimes S_\rho^2 \right) \operatorname{Cov}(\xi) \right] \nonumber\\ 
&= \sigma^2 \operatorname{tr} \left( I_q\otimes S_\rho^2 \right).
\label{eq:variance-covariance-proof}
\end{align}
Using the trace identity
\[
\operatorname{tr}(A\otimes B) = \operatorname{tr}(A)\operatorname{tr}(B),
\]
we obtain
\begin{align}
\operatorname{tr} \left( I_q\otimes S_\rho^2 \right) 
&= \operatorname{tr}(I_q) \operatorname{tr}(S_\rho^2) \nonumber\\ 
&= q\operatorname{tr}(S_\rho^2).
\end{align}
Hence
\begin{equation}
\mathbb E \left[ \left\| S_\rho\Xi \right\|_F^2 \right] = q\sigma^2\operatorname{tr}(S_\rho^2).
\label{eq:variance-trace-result-proof}
\end{equation}

Finally, from the spectral representation of \(S_\rho\),
\begin{align}
S_\rho^2 = \Phi(I+\rho\Lambda)^{-2}\Phi^\top.
\end{align}
By invariance of the trace under orthogonal similarity transformations,
\begin{align}
\operatorname{tr}(S_\rho^2) 
&= \operatorname{tr} \left[ \Phi(I+\rho\Lambda)^{-2}\Phi^\top \right] \nonumber\\ 
&= \operatorname{tr} \left[ (I+\rho\Lambda)^{-2} \right] \nonumber\\ 
&= \sum_{k=1}^N \frac{1} {(1+\rho\lambda_k)^2}.
\label{eq:smoother-trace-spectral-proof}
\end{align}
Therefore, we have
\begin{equation}
\mathbb E \left[ \left\| S_\rho\Xi \right\|_F^2 \right] = q\sigma^2 \sum_{k=1}^N \frac{1} {(1+\rho\lambda_k)^2}.
\label{eq:exact-variance-term-proof}
\end{equation}

Substituting Eqs.~\eqref{eq:exact-bias-term-proof} and \eqref{eq:exact-variance-term-proof} into Eq.~\eqref{eq:bias-variance-basic-proof} proves Eq.~\eqref{eq:exact-spectral-risk}.
\end{proof}

\section{Proof of Theorem~\ref{thm:persistent-cross-edges-complete}}
\label{secA2}
\begin{proof}
Let $T = |\theta_+|+|\theta_-|$. Since $c_\varepsilon<\frac12$, we may choose a constant \(\eta\) satisfying
\begin{equation}
2c_\varepsilon<\eta<1.
\label{eq:eta-choice}
\end{equation}
Because $\frac{2\varepsilon_N}{h_N} \longrightarrow 2c_\varepsilon$, there exists \(N_0\) such that
\begin{equation}
\frac{2\varepsilon_N}{h_N} \leq\eta, \qquad N\geq N_0.
\label{eq:epsilon-upper-bound}
\end{equation}

Choose \(a>0\) sufficiently small that
\begin{equation}
a^2+(\eta+aT)^2<1.
\label{eq:a-geometric-condition}
\end{equation}
Such a choice is possible because \(\eta<1\). Since \(h_N\to0\), we also have $ah_N\leq s_0$ for all sufficiently large \(N\).

We first show that every pair consisting of one cell from each daughter branch inside \(\mathcal U_N\) lies within graph distance \(h_N\). Let $s,t\in[0,ah_N]$. Using the branch parameterization,
\begin{align}
\gamma_{+,N}(s)-\gamma_{-,N}(t) =
\begin{pmatrix}
s-t\\
2\varepsilon_N+\theta_+s-\theta_-t\\
0_{p-2}
\end{pmatrix}.
\end{align}
Consequently,
\begin{align}
\left\| \gamma_{+,N}(s)-\gamma_{-,N}(t) \right\|_2^2 = (s-t)^2 + \left( 2\varepsilon_N+\theta_+s-\theta_-t \right)^2.
\label{eq:cross-distance-exact}
\end{align}
Since \(s,t\in[0,ah_N]\), $|s-t|\leq ah_N$, and
\[
\left| 2\varepsilon_N+\theta_+s-\theta_-t \right| \leq 2\varepsilon_N + a h_N \left( |\theta_+|+|\theta_-| \right).
\]
For sufficiently large \(N\), Eq.~\eqref{eq:epsilon-upper-bound} therefore gives
\begin{align}
\left\| \gamma_{+,N}(s)-\gamma_{-,N}(t) \right\|_2^2 \leq h_N^2 \left[ a^2+(\eta+aT)^2 \right] < h_N^2,
\label{eq:all-local-cross-pairs-close}
\end{align}
where the final inequality follows from Eq.~\eqref{eq:a-geometric-condition}. Hence
\begin{equation}
\left\| \gamma_{+,N}(s)-\gamma_{-,N}(t) \right\|_2 < h_N.
\label{eq:local-cross-pair-bandwidth}
\end{equation}
By the kernel lower bound Eq.~\eqref{eq:kernel-lower-bound}, every such cross-branch pair has weight at least \(k_0\).

Define the local daughter-branch cell counts
\begin{equation}
Z_{b,N} = \sum_{i=1}^N \mathbf 1 \left\{ B_i=b,\, S_i\in[0,ah_N] \right\}, \qquad b\in\{+,-\}.
\label{eq:local-branch-count}
\end{equation}
Then $Z_{b,N} \sim \operatorname{Binomial}(N,p_{b,N})$, where
\begin{equation}
p_{b,N} = \pi_b \int_0^{ah_N}f_b(s)\,ds.
\label{eq:local-branch-probability}
\end{equation}
The density bounds imply
\begin{equation}
\pi_b f_{\min}ah_N \leq p_{b,N} \leq \pi_b f_{\max}ah_N.
\label{eq:local-probability-bounds}
\end{equation}
In particular,
\[
Np_{b,N} \geq \pi_b f_{\min}aNh_N \longrightarrow\infty.
\]

A standard multiplicative Chernoff bound gives
\begin{equation}
\mathbb P \left[ Z_{b,N} < \frac12Np_{b,N} \right] \leq \exp \left( -\frac18Np_{b,N} \right) \longrightarrow0.
\label{eq:chernoff-lower}
\end{equation}
Therefore, with probability tending to one,
\begin{equation}
Z_{b,N} \geq \frac12 \pi_b f_{\min}aNh_N, \qquad b\in\{+,-\}.
\label{eq:local-count-lower}
\end{equation}

Let $Z_N=Z_{+,N}+Z_{-,N}$ be the total number of cells in \(\mathcal U_N\). Since
\begin{align}
\mathbb E[Z_N] 
&= N \sum_{b\in\{+,-\}} \pi_b \int_0^{ah_N}f_b(s)\,ds \nonumber\\ 
&\leq Naf_{\max}h_N,
\end{align}
another Chernoff bound implies
\begin{equation}
\mathbb P \left[ Z_N>2af_{\max}Nh_N \right] \longrightarrow0.
\label{eq:local-total-count-upper}
\end{equation}
Thus, with probability tending to one,
\begin{equation}
Z_N \leq 2af_{\max}Nh_N.
\label{eq:local-count-upper-event}
\end{equation}

On the intersection of the high-probability events Eq.~\eqref{eq:local-count-lower} and Eq.~\eqref{eq:local-count-upper-event}, every pair formed by one local \(+\)-branch cell and one local \(-\)-branch cell has weight at least \(k_0\). Consequently,
\begin{align}
C_N^{\mathrm{cross}} \geq k_0Z_{+,N}Z_{-,N} \geq \frac{ k_0\pi_+\pi_-f_{\min}^2a^2 }{4} N^2h_N^2.
\label{eq:cross-mass-lower-bound}
\end{align}

On the other hand, all edge weights are bounded above by \(K_{\max}\). The total number of unordered pairs of cells in \(\mathcal U_N\) is at most
\[
\binom{Z_N}{2}
\leq
\frac12Z_N^2.
\]
Therefore,
\begin{align}
C_N^{\mathrm{within}} + C_N^{\mathrm{cross}} 
&\leq K_{\max}\binom{Z_N}{2} \nonumber\\ 
&\leq \frac{K_{\max}}{2}Z_N^2 \nonumber\\ 
&\leq 2K_{\max}a^2f_{\max}^2 N^2h_N^2.
\label{eq:total-mass-upper-bound}
\end{align}

Combining Eq.~\eqref{eq:cross-mass-lower-bound} and Eq.~\eqref{eq:total-mass-upper-bound} yields
\begin{align}
\frac{ C_N^{\mathrm{cross}} }{ C_N^{\mathrm{within}} + C_N^{\mathrm{cross}} } &\geq \frac{ k_0\pi_+\pi_-f_{\min}^2 }{ 8K_{\max}f_{\max}^2 }
\label{eq:cross-ratio-explicit-lower-bound}
\end{align}
with probability tending to one.

Hence the conclusion holds with, for example,
\begin{equation}
c_{\mathrm{cross}} = \min \left\{ \frac12,\, \frac{ k_0\pi_+\pi_-f_{\min}^2 }{ 8K_{\max}f_{\max}^2 } \right\} >0.
\label{eq:cross-constant-definition}
\end{equation}
This proves Eq.~\eqref{eq:persistent-cross-edge-ratio-complete}.
\end{proof}

\section{Proof of Theorem~\ref{thm:rigorous-quotient-inconsistency}}
\label{secA3}

\begin{proof}
Fix \(N\) and a bin \(r\). By the observation model Eq.~\eqref{eq:local-observation-model}, we have
\[
\widehat V_{r,N}^{(0)} = V_{r,N}^\star+\Xi_{r,N}, \qquad \mathbb E[\Xi_{r,N}]=0.
\]
Since \(L_{r,N}\) is treated as fixed under the conditional expectation,
\begin{equation}
\mathbb E \left[ \widehat V_{r,N,\rho_N} \right] = (I_{n_{r,N}}+\rho_NL_{r,N})^{-1} V_{r,N}^\star.
\label{eq:expected-local-estimator}
\end{equation}

By Eq.~\eqref{eq:local-block-constant-signal}, we have $V_{r,N}^\star = J_{r,N}U_{r,N}^\star$. The intertwining identity Eq.~\eqref{eq:exact-quotient-intertwining} gives
\begin{align}
(I_{n_{r,N}}+\rho_NL_{r,N})J_{r,N} 
&= J_{r,N} + \rho_NL_{r,N}J_{r,N} \nonumber\\ 
&= J_{r,N} + \rho_NJ_{r,N}\overline L_{r,N} \nonumber\\ 
&= J_{r,N} (I_2+\rho_N\overline L_{r,N}).
\label{eq:resolvent-intertwining-step}
\end{align}

Because $L_{r,N}\succeq0$ and $\rho_N\geq0$, the matrix $I_{n_{r,N}}+\rho_NL_{r,N}$ is invertible. Moreover, the quotient matrix \(\overline L_{r,N}\) has eigenvalues $0$ and $c_{r,N} = \beta_{r,N} \left( n_{r,+,N}^{-1} + n_{r,-,N}^{-1} \right) \geq0$. Therefore, matrix $I_2+\rho_N\overline L_{r,N}$ has eigenvalues $1$ and $1+\rho_Nc_{r,N}>0$, and is also invertible.

Multiplying Eq.~\eqref{eq:resolvent-intertwining-step} from the left by \((I_{n_{r,N}}+\rho_NL_{r,N})^{-1}\) and from the right by \((I_2+\rho_N\overline L_{r,N})^{-1}\) yields
\begin{equation}
(I_{n_{r,N}}+\rho_NL_{r,N})^{-1}J_{r,N} = J_{r,N} (I_2+\rho_N\overline L_{r,N})^{-1}.
\label{eq:resolvent-intertwining}
\end{equation}
Combining Eq.~\eqref{eq:expected-local-estimator}, Eq.~\eqref{eq:local-block-constant-signal}, and Eq.~\eqref{eq:resolvent-intertwining}, we obtain
\begin{align}
\mathbb E \left[ \widehat V_{r,N,\rho_N} \right] 
&= (I_{n_{r,N}}+\rho_NL_{r,N})^{-1} J_{r,N}U_{r,N}^\star \nonumber\\ 
&= J_{r,N} (I_2+\rho_N\overline L_{r,N})^{-1} U_{r,N}^\star.
\label{eq:expected-estimator-quotient-form}
\end{align}

Let
\[
U_{r,N,\rho_N} = (I_2+\rho_N\overline L_{r,N})^{-1} U_{r,N}^\star =
\begin{bmatrix}
u_{r,+,N}^\top
\\[1mm]
u_{r,-,N}^\top
\end{bmatrix},
\]
where
\[
u_{r,+,N}, u_{r,-,N} \in \mathbb R^q.
\]
Then
\[
(I_2+\rho_N\overline L_{r,N}) U_{r,N,\rho_N} = U_{r,N}^\star,
\]
which is equivalent to
\begin{align}
u_{r,+,N} + \rho_N \frac{\beta_{r,N}}{n_{r,+,N}} \left( u_{r,+,N}-u_{r,-,N} \right) 
&= \mu_{r,+,N}, \label{eq:local-quotient-system-plus} \\ 
u_{r,-,N} + \rho_N \frac{\beta_{r,N}}{n_{r,-,N}} \left( u_{r,-,N}-u_{r,+,N} \right) 
&= \mu_{r,-,N}.
\label{eq:local-quotient-system-minus}
\end{align}

Subtracting Eq.~\eqref{eq:local-quotient-system-minus} from Eq.~\eqref{eq:local-quotient-system-plus} gives
\begin{align}
\left[ 1+ \rho_N\beta_{r,N} \left( n_{r,+,N}^{-1} + n_{r,-,N}^{-1} \right) \right] \left( u_{r,+,N}-u_{r,-,N} \right) = \mu_{r,+,N} - \mu_{r,-,N}.
\end{align}
Using the definitions of \(c_{r,N}\) and \(\Delta_{r,N}^\star\), this becomes
\begin{equation}
(1+\rho_Nc_{r,N}) \left( u_{r,+,N}-u_{r,-,N} \right) = \Delta_{r,N}^\star.
\label{eq:local-contrast-equation}
\end{equation}
Hence
\begin{equation}
u_{r,+,N}-u_{r,-,N} = \frac{1}{1+\rho_Nc_{r,N}} \Delta_{r,N}^\star.
\label{eq:local-quotient-contrast}
\end{equation}

Since $M_{r,N}J_{r,N}=I_2$, Eq.~\eqref{eq:expected-estimator-quotient-form} implies
\begin{align}
\Delta_{r,N} \left( \mathbb E[ \widehat V_{r,N,\rho_N} ] \right) 
&= d^\top M_{r,N} J_{r,N} U_{r,N,\rho_N} \nonumber\\ 
&= d^\top U_{r,N,\rho_N} \nonumber\\ 
&= u_{r,+,N} - u_{r,-,N}.
\end{align}
Therefore, by Eq.~\eqref{eq:local-quotient-contrast}, we have
\begin{equation}
\Delta_{r,N}
\left(
\mathbb E[
\widehat V_{r,N,\rho_N}
]
\right)
=
\frac{1}{1+\rho_Nc_{r,N}}
\Delta_{r,N}^\star.
\label{eq:local-contrast-attenuation}
\end{equation}

Subtracting the true contrast yields
\begin{align}
\Delta_{r,N} \left( \mathbb E[ \widehat V_{r,N,\rho_N} ] \right) - \Delta_{r,N}^\star = - \frac{\rho_Nc_{r,N}} {1+\rho_Nc_{r,N}} \Delta_{r,N}^\star.
\end{align}
Consequently, we have
\begin{equation}
\left\| \Delta_{r,N} \left( \mathbb E[ \widehat V_{r,N,\rho_N} ] \right) - \Delta_{r,N}^\star \right\|_2^2 = \left( \frac{\rho_Nc_{r,N}} {1+\rho_Nc_{r,N}} \right)^2 \|\Delta_{r,N}^\star\|_2^2.
\label{eq:local-contrast-bias-exact}
\end{equation}

Because \(\Delta_{r,N}\) is linear, we have
\[
\mathbb E \left[ \Delta_{r,N} \left( \widehat V_{r,N,\rho_N} \right) \right] = \Delta_{r,N} \left( \mathbb E[ \widehat V_{r,N,\rho_N} ] \right).
\]
The bias--variance identity therefore gives
\begin{align}
& \phantom{=} \mathbb E \left[ \left\| \Delta_{r,N} \left( \widehat V_{r,N,\rho_N} \right) - \Delta_{r,N}^\star \right\|_2^2 \right] \nonumber\\ 
& = \left\| \Delta_{r,N} \left( \mathbb E[ \widehat V_{r,N,\rho_N} ] \right) - \Delta_{r,N}^\star \right\|_2^2 + \mathbb E \left[ \left\| \Delta_{r,N} \left( \widehat V_{r,N,\rho_N} \right) - \Delta_{r,N} \left( \mathbb E[ \widehat V_{r,N,\rho_N} ] \right) \right\|_2^2 \right] \nonumber\\ 
& \geq \left\| \Delta_{r,N} \left( \mathbb E[ \widehat V_{r,N,\rho_N} ] \right) - \Delta_{r,N}^\star \right\|_2^2.
\label{eq:local-bias-variance-lower-bound}
\end{align}
Equivalently, the same inequality follows from Jensen's inequality.

Using Eq.~\eqref{eq:local-contrast-bias-exact}, we obtain
\begin{align}
\mathbb E \left[ \left\| \Delta_{r,N} \left( \widehat V_{r,N,\rho_N} \right) - \Delta_{r,N}^\star \right\|_2^2 \right] \geq \left( \frac{\rho_Nc_{r,N}} {1+\rho_Nc_{r,N}} \right)^2 \|\Delta_{r,N}^\star\|_2^2.
\label{eq:local-contrast-risk-lower-bound}
\end{align}
Multiplying by \(\omega_{r,N}\) and summing over \(r=1,\ldots,m_N\) yields
\begin{align}
\mathbb E \left[ \mathcal L_{\mathrm{contrast},N} \left( \widehat V_{\rho_N}, V_N^\star \right) \right] \geq \sum_{r=1}^{m_N} \omega_{r,N} \left( \frac{\rho_Nc_{r,N}} {1+\rho_Nc_{r,N}} \right)^2 \|\Delta_{r,N}^\star\|_2^2.
\label{eq:global-bias-sum}
\end{align}
Restricting the sum to \(r\in\mathcal R_N\) yields
\begin{align}
\mathbb E \left[ \mathcal L_{\mathrm{contrast},N} \left( \widehat V_{\rho_N}, V_N^\star \right) \right] \geq \sum_{r\in\mathcal R_N} \omega_{r,N} \left( \frac{\rho_Nc_{r,N}} {1+\rho_Nc_{r,N}} \right)^2 \|\Delta_{r,N}^\star\|_2^2.
\label{eq:restricted-global-bias-sum}
\end{align}

The function
\[
g(x) = \frac{x}{1+x}, \qquad x\geq0,
\]
is nondecreasing. Hence, for every \(r\in\mathcal R_N\), inequality $\rho_Nc_{r,N}\geq c_0$ implies
\[
\frac{\rho_Nc_{r,N}} {1+\rho_Nc_{r,N}} \geq \frac{c_0}{1+c_0}.
\]
Together with $\|\Delta_{r,N}^\star\|_2 \geq \delta_v$, we obtain
\begin{align}
\mathbb E \left[ \mathcal L_{\mathrm{contrast},N} \left( \widehat V_{\rho_N}, V_N^\star \right) \right] 
&\geq \sum_{r\in\mathcal R_N} \omega_{r,N} \left( \frac{c_0}{1+c_0} \right)^2 \delta_v^2 \nonumber\\ 
& = \left( \sum_{r\in\mathcal R_N} \omega_{r,N} \right) \left( \frac{c_0}{1+c_0} \right)^2 \delta_v^2 \nonumber\\ 
&\geq \omega_0 \left( \frac{c_0}{1+c_0} \right)^2 \delta_v^2.
\end{align}
This proves Eq.~\eqref{eq:finite-n-contrast-lower-bound}. Taking the lower limit as \(N\to\infty\) gives Eq.~\eqref{eq:asymptotic-contrast-lower-bound}.
\end{proof}

\end{appendices}

\bibliography{sn-bibliography}
\end{document}